\theoremstyle{plain}
\newtheorem{theorem}{Theorem}
\newtheorem{corollary}{Corollary}
\newtheorem{definition}{Definition}
\newtheorem{lemma}{Lemma}
\newtheorem{proposition}{Proposition}
\numberwithin{equation}{section}
\newcommand{\Real}{\mathbb R}
\newcommand{\N}{\mathbb N}
\newcommand{\Z}{\mathbb Z}
\newcommand{\Pro}{\mathbf{P} }
\newcommand{\Prob}{\mathbb P}
\newcommand{\Probc}{\Prob^c}
\newcommand{\Probt}{\widetilde{\Prob}}
\newcommand{\Psg}{\mathrm{P}}
\newcommand{\eps}{\varepsilon}
\newcommand{\To}{\longrightarrow}
\newcommand{\Inv}{\frac{1}}
\newcommand{\n}{\mathbf{n}}
\newcommand{\nbis}{\overline \n}
\newcommand{\nti}{\tilde \n}
\newcommand{\E}{\mathcal E}
\newcommand{\un}{\mathbbm 1}
\newcommand{\Tr}{\zeta}
\newcommand{\Vr}{V}
\newcommand{\Tri}{\Tr_\infty}
\newcommand{\Pup}{\widetilde {\mathbf{P}}}
\newcommand{\Probup}{\widetilde {\Prob}}
\newcommand{\Probupzero}{\Probup_{0^+}}
\newcommand{\Pupz}{\Probupzero}
\newcommand{\Proup} {\widetilde \Pro}
\newcommand{\muup} {\mu}
\newcommand{\overup} {\overshoot}
\newcommand{\B}{\mathrm{B}}
\newcommand{\F}{\mathfrak F}
\newcommand{\de}{\mathrm{d}}
\newcommand{\ccv}{\exp(-\pi / \sqrt 3)}
\newcommand{\overshoot}{m}
\newcommand{\ccr}{c_{crit}}
\newcommand{\Cbb}{\Ccal}
\newcommand{\Ccal}{\mathcal{C}}
\newcommand{\espace}{\vspace{.4cm}}
\newcommand{\Pcz}{\Prob_0^r} 
\newcommand{\Ae}{A^\eps}
\newcommand{\Ti} {\widetilde}
\newcommand{\Thetasp}{\Theta^{sp}}
\newcommand{\citejacobIHP} {\cite{jacob2010}\xspace}
\newcommand{\citejacobRLPI} {\cite{jacobRLP1}\xspace}
\date{}
\title{  \bf Langevin process reflected on a partially elastic boundary II}
\author{ \bf Emmanuel Jacob\footnote{
{\bf email.} emmanuel.jacob@normalesup.org \newline {\null \quad
\;\;\:{\bf website.} http://www.proba.jussieu.fr/pageperso/jacob}} }
\begin{document}

\maketitle{}
                        { \center
\emph{Laboratoire de Probabilit\'es et Mod\`eles Al\'eatoires} \\
\emph {Universit\'e Pierre et Marie Curie} \\
\emph{4 place Jussieu, 75005 Paris, France} \espace

\textbf{Abstract} \espace

\hspace{.8cm} \begin{minipage}{0.9\textwidth} A particle subject to
a white noise external forcing moves like a Langevin process.
Consider now that the particle is reflected at a boundary which
restores a portion $c$ of the incoming speed at each bounce. For $c$
strictly smaller than the critical value $\ccr=\exp(-\pi/\sqrt3)$,
the bounces of the reflected process accumulate in a finite time. We
show that nonetheless the particle is not necessarily absorbed after
this time. We define a ``resurrected'' reflected process as a
recurrent extension of the absorbed process, and study some of its
properties. We also prove that this resurrected reflected process is
the unique solution to the stochastic partial differential equation
describing the model. Our approach consists in defining the process
conditioned on never being absorbed, via an $h-$transform, and then
giving the It\=o excursion measure of the recurrent extension thanks
to a formula fairly similar to Imhof's relation.
\end{minipage}
                                }
\espace

 \vspace{9pt} \noindent {\bf Key words.}
{Langevin process}, {second order reflection}, {recurrent
extension}, {excursion measure}, {stochastic partial differential
equation}, {$h$-transform.}
\par \vspace{9pt}

\noindent {\bf A.M.S classification.} (MSC2010)\quad {60J50},
{60H15} \espace

\section{Introduction}

Consider a particle in a one-dimensional space, submitted to a white
noise external forcing. Its velocity is then well-defined and given
by a Brownian motion, while its position is given by a so-called
Langevin process. The Langevin process is non-Markov, therefore its
study is often based on that of the Kolmogorov process, which is
Markov. This Kolmogorov process is simply the two-dimensional
process, whose first coordinate is a Langevin process, and second
coordinate its derivative. We refer to Lachal \cite{Lachal03} for a
detailed account about it. Further, suppose that the particle is
constrained to stay in $[0,+\infty[$ by a boundary at 0
characterized by an elasticity coefficient $c\ge 0$. That is, the
boundary restores a portion $c$ of the incoming velocity at each
bounce, and the equation of motion that we consider is the
following:
$$ (SOR)\qquad \left\{ \begin{array}{ccl} 
X_t &=& X_0 + \displaystyle \int_0^t \dot X_s\de s \\
 \\
\dot X_t &=& \dot X_0+ B_t - (1+c) \sum_{0< s \le t} \dot X_{s-}
\un_{X_s=0},
\end{array}
\right. $$ where $B$ is a standard Brownian motion and $(X_0,\dot
X_0)$ is called the initial or starting condition. This stochastic
partial differential equation is nice outside the point $(0,0)$.
Indeed, if the starting condition is different from $(0,0)$, there
is a simple pathwise construction of the solution to this equation
system, until time $\Tri$, the hitting time of $(0,0)$ for the
process $(X,\dot X)$. However there is a tough problem at $(0,0)$.
Indeed, there exists an old literature about a deterministic
analogue to theses equations, where the white noise force is
replaced by a deterministic force. See Ballard \cite{Ballard} for a
vast review. As early as in 1960, Bressan \cite{Bressan} pointed out
that multiple solutions may occur, even when the force is $\mathcal
C^\infty$. It appears that the introduction of a white noise allows
to get back a weak uniqueness result. We refer to \cite{SDE} (see
also \cite{reflecting}, \cite{jacob2010}) for the particular case
$c=0$.

\espace

In \citejacobRLPI, we have shown for $c>0$ the existence of two
different regimes, the critical elasticity being $\ccr:=\ccv$. It is
critical in the sense that when the starting condition is different
from $(0,0)$, then we have $\Tri=+\infty$ almost surely if $c\ge
\ccr$, and $\Tri<+\infty$ almost surely if $c<\ccr$. Further, we
studied the super-critical and the critical regimes. In this paper,
we study the sub-critical regime $c<\ccr$. The finite time $\Tri$
corresponds to an accumulation of bounces in a finite time. We write
$\Prob_{x,u}^c$ for the law of the reflected Kolmogorov process,
with starting condition $(x,u)\ne (0,0)$, elasticity coefficient
$c$, and \emph{killed at time $\Tri$}. It is the unique strong
solution to $(SOR)$ equations, up to time $\Tri$. We also write
$\Psg_t^c$ for the associated semigroup. We will devote ourselves to
prove the existence of a unique recurrent extension to this process
that leaves $(0,0)$ continuously. Moreover, we will prove that this
extension gives the unique solution, in the weak sense, to $(SOR)$
equations.

We point out that this model was encountered by Bect in his thesis
(\cite{Bect07}, section III.4.B). He observed the existence of the
critical elasticity and asked several questions on the different
regimes. We answer to all of them.

\espace

In this work we will be largely inspired by a paper of Rivero
\cite{rivero04}, in which he studies the recurrent extensions of a
self-similar Markov process with semigroup $\Psg_t$. Briefly, first,
he recalls that recurrent extensions are equivalent to excursion
measures compatible with $\Psg_t$, thanks to It\=o's program.
 Then a change of probability allows him to define the Markov process conditioned on never hitting
 $0$, where this conditioning is in the sense of Doob, via an $h-$transform. An inverse $h-$transform
 on the Markov process conditioned on never hitting zero \emph{and starting from 0}
 then gives the construction of the excursion measure.

 We will not recall it at each step throughout the paper,
but a lot of parallels can be made. However, it is a two-dimensional
Markov process that we consider here. Further, its study will rely
on an underlying random walk $(S_n)_{n\in\N}$ constructed from the
velocities at bouncing times.

\espace

In the Preliminaries, we introduce this random walk and use it to
estimate the tail of the variable $\Tri$ under $\Prob_{0,1}^c$. In
the Section~\ref{section_conditioned}, we introduce a change of
probability, via an $h-$transform, to define $\Probup_{x,u}$, law of
a process which can be viewed as the reflected Kolmogorov process
conditioned on never being killed. We then show in
Subsection~\ref{section_Pupz} that this law has a weak limit $\Pupz$
when $(x,u)$ goes to $(0,0)$, using the same method that was used in
\citejacobRLPI to show that for $c>\ccr$, the laws $\Prob_{0,u}^c$
have the weak limit $\Prob_{0+}^c$ when $u$ goes to zero. All this
section can be seen as a long digression to prepare the construction
of the excursion measure in Section~\ref{section_Resurrect}. This
excursion measure is defined by a formula similar to Imhof's
relation (see \cite{Imhof}), connecting the excursion measure of
Brownian motion and the law of a Bessel(3) process. But our formula
involves the law $\Prob_{0+}^c$ and determines the unique excursion
measure compatible with the semigroup $\Psg_t^c$ . We call
\emph{resurrected Kolmogorov process} the corresponding recurrent
extension. Finally, we prove that this is the (weakly) unique
solution to $(SOR)$ equations when the starting condition is
$(0,0)$.

\section{Preliminaries}
We largely use the same notations as in \citejacobRLPI. For the sake
of simplicity, we use the same notation (say $P$) for a probability
measure and for the expectation under this measure. We will even
authorize ourselves to write $P(f,A)$ for the quantity $P(f \un_A)$,
when $f$ is a measurable functional and $A$ an event. We introduce
$D= (\{0\}\times \Real_+^* )\cup (\Real_+^*\times \Real)$ and
$D^0:=D \cup\{(0,0)\}$. Our working space is $\Cbb$, the space of
càdlàg trajectories $(x,\dot x): [0,\infty) \to D^0$, which satisfy
$$x(t) = x(0)+ \displaystyle \int_0^t \dot x(s)\de s.$$
That space is endowed with the $\sigma-$algebra generated by the
coordinate maps and with the topology induced by  the following
injection:
$$ \begin{array}{ccc}
\Cbb &\to& \Real_+ \times \mathbb D \\
(x,\dot x) & \mapsto & \big(x(0), \dot x\big),
\end{array}
$$
where $\mathbb D$ is the space of càdlàg trajectories on $\Real_+$,
equipped with Skorohod topology.

We denote by $(X,\dot X)$ the canonical process and by
$(\F_t,t\ge0)$ its natural filtration, satisfying the usual
conditions of right continuity and completeness. For an initial
condition $(x,u)\in D$, the $(SOR)$ equations
 $$ \left\{ \begin{array}{ccl} 
X_t &=& x + \displaystyle \int_0^t \dot X_s\de s \\
 \\
\dot X_t &=& u + B_t - (1+c) \sum_{0< s \le t} \dot X_{s-}
\un_{X_s=0}
\end{array}
\right. $$ have a unique solution, at least up to the random time
$$\Tri:= \inf \{t>0, X_t=0,\dot X_t=0\}.$$
We call (killed) reflected Kolmogorov process this solution killed
at time $\Tri$, and write $\Prob_{x,u}^c$ for its law. It is Markov.
We also call reflected Langevin process the first coordinate of this
process, which is no longer Markov.

Call $\Tr_1$ the first hitting time of zero for the reflected
Langevin process $X$, that is $\Tr_1:=\inf \{t>0, X_t=0\}$. More
generally, the sequence of the successive hitting times of zero
$(\Tr_n)_{n\ge1}$ is defined recursively by $\Tr_{n+1}:=\inf
\{t>\Tr_n, X_t=0\}$. We write $(\Vr_n)_{n\ge1}:=(\dot
X_{\Tr_n})_{n\ge1}$ for the sequence of the velocities of the
process at these hitting times. That means outgoing velocities, as
we are dealing with right-continuous processes. Finally, when the
starting position is $x=0$, we will simply write $\Probc_u$ for
$\Probc_{0,u}$, and we will also define $\Tr_0=0$ and $\Vr_0=\dot
X_0$. We insist on the fact that in each case the starting condition
$(x,u)$ is different from $(0,0)$. Then it is not difficult to see
that $\Tri$ coincides almost surely with $\sup \Tr_n$. But we can
say much more.

\espace The sequence $\left( \dfrac {\Tr_{n+1}- \Tr_n} {\Vr_n^2},
\dfrac {\Vr_{n+1}} {\Vr_n} \right)_{n\ge0}$ is i.i.d. and of law
independent of $u$, which can be deduced from the following density:
\begin{equation} \label{loijointe_VrTrII}
\Inv {\de s \de v} \Prob_1^c\left( ( {\Tr_{1}}, {\Vr_{1}} /c) \in
(\de s, \de v)  \right) = \frac {3 v} {\pi \sqrt2 s^2}
\exp(-2\frac{v^2-v+1}{s}) \int_0^{4v/ s} e^{-\frac {3 \theta} 2}
\frac {\de \theta} {\sqrt {\pi \theta}},
\end{equation}
given by McKean \cite{McKean}. The second marginal of this density
is
\begin{equation} \label{loi_VrI}
\Prob_1^c( {\Vr_{1}} /c \in \de v)= \frac 3 {2\pi} \frac {v^{\frac 3
2}} {1+v^3} \de v.
\end{equation}
In particular, the sequence $S_n:=\ln(\Vr_n)$ is a random walk, with
drift
$$\Prob_1^{c}(S_1 - S_0) = \ln(c)+\frac \pi {\sqrt 3},$$
which is zero for the critical value $\ccr=\ccv$. In this paper we
lie in the subcritical case $c< \ccr$, when the drift is negative. A
thorough study allows to not only deduce the finiteness of $\Tri$,
but also estimate its tail.

\begin{lemma} \label{lemme_queueTri}
We have
\begin{equation}\label{moments_Vr}
    \Prob_1^c\left( \Vr_1^x\right) = \frac {c^x} {2 \cos(\frac {x+1} 3 \pi)} \textrm{ for } x< 1/2.
\end{equation}
There exists a unique $k=k(c)$ in $(0,1/4)$ such that $
\Prob_1^c\left( \Vr_1^{2k}\right)=1$, and
 \begin{equation} \label{goldie}
\Prob_1^c(\Tri > t) \underset{t\to \infty}{\sim} C_1 t^{-k},
\end{equation}
where $C_1=C_1(c) \in (0,\infty)$ is a constant depending only on
$c$, given by
\begin{equation} \label{valeur_C_1}
C_1 = \frac  {\Prob_1^c \left(\Tri^k - (\Tri- \Tr_1)^k \right)} {k
\Prob_1^c( \Vr_1^{2k} \ln(\Vr_1^2)) }.
\end{equation}
\end{lemma}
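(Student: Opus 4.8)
\noindent\emph{The moment formula~\eqref{moments_Vr}.}
Integrating against the explicit density~\eqref{loi_VrI}, for $-5/2<x<1/2$ (the range of finiteness),
\[
\Prob_1^c(\Vr_1^x)=c^x\,\frac{3}{2\pi}\int_0^{\infty}\frac{v^{\,x+3/2}}{1+v^3}\,\de v
= c^x\,\frac{3}{2\pi}\cdot\frac{\pi}{3\,\sin\!\big(\tfrac{\pi}{3}(x+\tfrac52)\big)},
\]
by the classical evaluation $\int_0^\infty t^{s-1}(1+t^n)^{-1}\,\de t=\pi/\big(n\sin(\pi s/n)\big)$ with $n=3$, $s=x+\tfrac52$; then $\sin\!\big(\tfrac{\pi}{3}(x+\tfrac52)\big)=\cos\!\big(\tfrac{\pi}{3}(x+1)\big)$ yields~\eqref{moments_Vr}, and shows $\Prob_1^c(\Vr_1^x)$ is finite exactly for $x\in(-5/2,1/2)$.

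\noindent\emph{The exponent $k$.}
Put $\phi(x):=\Prob_1^c(\Vr_1^x)=\Prob_1^c(e^{xS_1})$, which on $(-5/2,1/2)$ is real-analytic and strictly log-convex (being a Laplace transform of the non-degenerate variable $S_1$; one also reads log-convexity off the closed form, $-\ln\cos$ being convex). From~\eqref{moments_Vr}, $\phi(0)=\tfrac1{2\cos(\pi/3)}=1$; differentiating, $\phi'(0)=\Prob_1^c(S_1-S_0)=\ln c+\pi/\sqrt3<0$ in the subcritical regime; and $\phi(x)\to+\infty$ as $x\uparrow1/2$. Hence $\ln\phi$ is convex, vanishes at $0$ with strictly negative slope there, is positive on $(-5/2,0)$, and tends to $+\infty$ at $1/2$; by strict convexity it has exactly one further zero, necessarily in $(0,1/2)$. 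Writing it as $2k$ gives the unique $k\in(0,1/4)$ with $\Prob_1^c(\Vr_1^{2k})=1$, and strict convexity also forces $\phi'(2k)=\Prob_1^c(\Vr_1^{2k}\ln\Vr_1)>0$, which is used below.

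\noindent\emph{The tail.}
The plan is to present $\Tri$ as an affine perpetuity. Since $\big(\tfrac{\Tr_{n+1}-\Tr_n}{\Vr_n^2},\tfrac{\Vr_{n+1}}{\Vr_n}\big)_{n\ge0}$ is i.i.d.\ and $\Vr_0=1$ under $\Prob_1^c$, telescoping the velocity ratios gives
\[
\Tri=\sum_{n\ge0}A_n\prod_{j=0}^{n-1}M_j^2,\qquad A_n:=\tfrac{\Tr_{n+1}-\Tr_n}{\Vr_n^2},\quad M_j:=\tfrac{\Vr_{j+1}}{\Vr_j},
\]
the series converging a.s.\ because $\Tri<\infty$ a.s.\ in the subcritical regime. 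Splitting off the $n=0$ term exhibits $\Tri$ as a solution of $R\overset{d}{=}Q+MR$ with $(Q,M):=(\Tr_1,\Vr_1^2)$ distributed as in~\eqref{loijointe_VrTrII} and $R$ an independent copy of $\Tri$; on the canonical space one moreover has $Q=\Tr_1$ and $MR=\Vr_1^2\,\Tri'=\Tri-\Tr_1$, where $\Tri'$ is the shifted copy of $\Tri$ built from $(A_n,M_n)_{n\ge1}$. I would then apply Goldie's implicit renewal theorem (C.~M.~Goldie, \emph{Implicit renewal theory and tails of solutions of random equations}, Ann.\ Appl.\ Probab.\ \textbf{1} (1991)), verifying its hypotheses: $\Prob_1^c(M^k)=\phi(2k)=1$; the law of $\ln M=2\ln\Vr_1$ is non-arithmetic (absolutely continuous, by~\eqref{loi_VrI}) with $\Prob_1^c(\ln M)=2(\ln c+\pi/\sqrt3)<0$; $\Prob_1^c(M^k\ln^+M)<\infty$ since $\phi$ is finite past $2k$; and $\Prob_1^c(Q^k)=\Prob_1^c(\Tr_1^k)<\infty$. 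The theorem then gives $\Prob_1^c(\Tri>t)\sim C_1 t^{-k}$ with
\[
C_1=\frac{\Prob_1^c\big[(Q+MR)^k-(MR)^k\big]}{k\,\Prob_1^c(M^k\ln M)}\in(0,\infty),
\]
finiteness and positivity following from $0<(Q+MR)^k-(MR)^k\le Q^k$ (subadditivity of $t\mapsto t^k$, $0<k<1$) together with $Q>0$ a.s.; identifying $Q+MR\overset{d}{=}\Tri$, $MR=\Tri-\Tr_1$ and $\Prob_1^c(M^k\ln M)=\Prob_1^c(\Vr_1^{2k}\ln(\Vr_1^2))$ turns this into~\eqref{valeur_C_1}.

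\noindent\emph{Main obstacle.}
I expect the one genuinely delicate point to be the integrability condition $\Prob_1^c(\Tr_1^k)<\infty$ in Goldie's theorem: this reduces to showing that $\Tr_1$, the first hitting time of $0$ by the reflected Langevin process started from $(0,1)$, has a tail of order $t^{-1/4}$ (the persistence exponent of integrated Brownian motion), hence finite moments of every order $<1/4$; this calls for a careful large-$s$ analysis of McKean's joint density~\eqref{loijointe_VrTrII}. The remaining steps are routine bookkeeping with the underlying i.i.d.\ multiplicative structure.
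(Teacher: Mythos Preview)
Your proof is correct and follows essentially the same route as the paper: compute the moments from the explicit density~\eqref{loi_VrI}, use convexity of the moment function to locate $k$, then cast $\Tri$ as a multiplicative perpetuity and apply Goldie's implicit renewal theorem after checking the standard hypotheses. The only substantive difference concerns your ``main obstacle'': the paper does not derive the $t^{-1/4}$ tail of $\Tr_1$ from McKean's density~\eqref{loijointe_VrTrII} here, but simply quotes it as Lemma~1 of the companion paper~\citejacobRLPI (see~\eqref{queue_TrrII}), which immediately gives $\Prob_1^c(\Tr_1^k)<\infty$ since $k<1/4$.
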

 In other words, $k(c)$ is given implicitly as the unique solution in
$]0,\Inv 4]$ of the equation
\begin{equation} \label{implicitk}
c = \left[ 2 \cos\left(\frac {2k+1} 3 \pi \right)\right]^{\Inv {2k}}.
\end{equation}
The upper bound $1/4$ stems from the fact that $\Prob_1^c\left(
\Vr_1^{2k}\right)$ becomes infinite for $k=1/4$. The value of $k(c)$
converges to $1/4$ when $c$ goes to 0, and to $0$ when $c$ goes to
$\ccr$, as illustrated by Figure~\ref{figure_k_c}. We may notice
that Formula~\eqref{goldie} remains true for $c=0$ and $k=1/4$ (and
for $c=\ccr$ and $k=0$, in a certain sense).

\begin{figure}[!ht]
\centering 
\includegraphics[width=\textwidth]{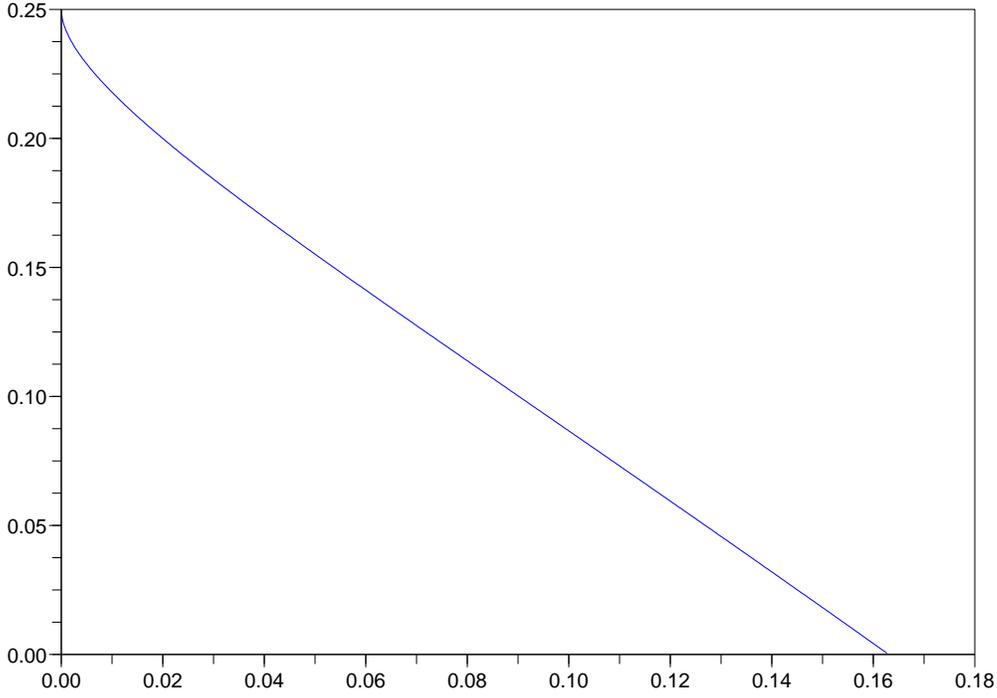}
\caption{Graph of the exponent k(c)}
 \label{figure_k_c}
 \end{figure}

\begin{proof}
Formula \eqref{moments_Vr} is not new. For the convenience of the
reader, we still provide the following calculation. From
Formula~\eqref{loi_VrI}, it follows, for $x<1/2$,
\begin{eqnarray*}
 \Prob_1^c\left(\left({\Vr_1}/c \right)^x\right) &=& \frac 3 {2 \pi} \int_0^\infty \frac {t^{x+ 3/2}} {1+t^3} \de t
 = \Inv {2 \pi} \int_0^\infty \frac {t^{\frac x 3 - \Inv 6}} {1+t} \de t.
\end{eqnarray*}
Note $\cos(\frac {x+1} 3 \pi) = \sin(\pi y)$, where $y= \frac x 3 +
\frac 5 6$. Using the variable $y$, which belongs to $(0,1)$,
Equation \eqref{moments_Vr} becomes
$$ \int_0^{\infty} \frac {t^{y-1}} {1+t} \de t = \frac \pi {\sin(\pi y)},$$
and follows from:
\begin{eqnarray*}
\int_0^{\infty} \frac {t^{y-1}} {1+t} \de t &=& \int_0^1 t^y (1-t)^{1-y} \de t \\
&=& \B(y,1-y) \\
&=& \frac {\Gamma(y) \Gamma(1-y)} {\Gamma(1)} \\
&=& \frac \pi {\sin(\pi y)}.
\end{eqnarray*}
where $\B$ and $\Gamma$ are the usual Beta and Gamma function,
respectively.

 Now, the function $x \mapsto \Prob_1^c\left( \Vr_1^x\right)$ is convex,
takes value 1 at $x=0$ and becomes infinite at $x= 1/2$. Its
derivative at 0 is equal to $\Prob_1^c(S_1-S_0)<0.$ We deduce that
there is indeed a unique $k(c)$ in $(0,\Inv 4)$ such that
$\Prob_1^c\left( \Vr_1^{2k}\right) = 1.$ \espace

Estimate \eqref{goldie} will appear as a particular case of an
``implicit renewal theory'' result of Goldie \cite{Goldie}. Let us
express $\Tri$ as the series:
\begin{eqnarray*}
\Tri &=& \sum_{n=1}^\infty \frac {\Tr_n - \Tr_{n-1}} {\Vr_{n-1}^2}
\Vr_{n-1}^2 ,
\end{eqnarray*}
with $\Vr_n^2:=\Vr_1^2 \frac {\Vr_2^2} {\Vr_1^2} \cdot\cdot\cdot
\frac {\Vr_n^2} {\Vr_{n-1}^2}$, and where $\left( \dfrac {\Tr_n-
\Tr_{n-1}} {\Vr_{n-1}^2}, \dfrac {\Vr_n^2} {\Vr_{n-1}^2}
\right)_{n\ge1}$ is i.i.d. We lie in the setting of Section~4 of
Goldie's paper \cite{Goldie}, and can apply its Theorem~(4.1).
Indeed, all the following conditions are satisfied:
$$\Prob_1^c( \Vr_1^{2k})=1,$$
$$\Prob_1^c( \Vr_1^{2k} \ln(\Vr_1^2))<\infty,$$
$$\Prob_1^c( \Tr_1^k )< \infty,$$
the last one being a consequence of the inequality $k<1/4$ and of
the
following estimate of the queue of the variable $\Tr_1$,
\begin{equation} \label{queue_TrrII} \Prob_1^c(\Tr_1>t) \underset{t\to \infty} {\sim} c' t^{-\Inv
4},
\end{equation}
which was already pointed out in Lemma~1 in~\citejacobRLPI. All this
is enough to apply the theorem of Goldie and deduce the requested
result, namely
$$ \Prob_1^c(\Tri > t) \underset{t\to \infty} {\sim} C_1 t^{-k}, $$
where $C_1$ is the constant defined by~\eqref{valeur_C_1}, and
belongs to $]0,\infty[$.
\end{proof}

Next section is devoted to the definition and study of the reflected
Kolmogorov process, \emph{conditioned on never hitting $(0,0)$}.
This process will be of great use for studying the recurrent
extensions of the reflected Kolmogorov process in
Section~\ref{section_Resurrect}.

\section{The reflected Kolmogorov process conditioned on never hitting $(0,0)$}
\label{section_conditioned}
\subsection{Definition via an $h-$transform}

Recall that under $\Prob_1^c$, the sequence $(S_n)_{n\ge0}=
(\ln(\Vr_n))_{n\ge0}$ is a random walk starting from 0, and write
$\Pro_0$ for its law.
 The important fact $\Prob_1^c(\Vr_1^{2k})=1$
implies $\Prob_1^c(\Vr_n^{2k})=1$ for any $n>0$, and can be
rewritten $\Pro_0(\theta^{S_n})=1$, with $\theta:= \exp(2k)$.

The sequence $\theta^{S_n}$ being a martingale, we introduce the
change of probability $$\Pup_0(S_n \in \de t)= \theta^t \Pro_0(S_n
\in \de t).$$ Under $\Pup_0$, $(S_n)_{n\ge 0}$ becomes a random walk
drifting to $+\infty$. Informally, it can be viewed as being the law
of the random walk $S_n$ under $\Pro_0$ conditioned on hitting
arbitrary high levels.

There is a corresponding change of probability for the reflected
Kolmogorov process and its law $\Prob_1^c$. We introduce the law
$\Probup_1$ determined by
$$ \Probup_1(A \un_{\Tr_n>T}) = \Probc_1(A \un_{\Tr_n>T} \Prob_1^c(\Vr_n^{2k}| \F_T)),$$
for any $n>0$, stopping-time $T$ and $A \in \F_T$. By the strong
Markov property we have
$$\Prob_1^c(\Vr_n^{2k}| \F_T)= \Prob_{X_T,\dot X_T}^c( \Vr_1^{2k}) \qquad \text{on the event } \{\Tr_n>T\},$$
so that there is the identity
$$ \Probup_1(A \un_{\Tr_n>T}) = \Probc_1(A \un_{\Tr_n>T} H(X_T,\dot X_T)),$$
where we have written
$$H(x,u) :=\Prob_{x,u}^c( \Vr_1^{2k}).$$
Note that $H(0,u)= u^{2k}$. Letting $n$ go to infinity, we get:
$$ \Probup_1(A \un_{\Tri>T}) = \Probc_1(A \un_{\Tri>T} H(X_T,\dot X_T)).$$
We have $H(0,1)=1$, the function $H$ is harmonic for the semigroup
of the reflected Kolmogorov process, and the process $\Probup_1$ is
the $h-$transform of $\Prob_1^c$, in the sense of Doob.

Under $\Probup_1$, the law of the sequence $(S_n)_{n\ge 0}$ is
$\Pup_0$, thus this sequence is diverging to $+\infty$, and as a
consequence the time $\Tri$ is infinite $\Probup_1-$almost surely.
The term $\un_{\Tri>T}$ in $\Probup_1(A \un_{\Tri>T})$ is thus
unnecessary. We may now give a more general definition of this
change of probability, as an $h-$transform, for any starting
position $(x,u)$.
\begin{definition} \label{def_Pup} 
The reflected Kolmogorov process \emph{conditioned on never hitting
$(0,0)$} is the Markov process given by its law $\Probup_{x,u}$, for
any starting condition $(x,u) \in D$, which is the unique measure
such that for every stopping-time $T$ we have
\begin{equation} \label{eq_def_Pup}
\Probup_{x,u}(A) = \Inv {H(x,u)} \Prob_{x,u}^c(A H(X_T,\dot X_T), T<
\Tri),
\end{equation}
 for any $A \in \F_T$. We write $\widetilde \Psg_t$ its associated
 semigroup, and we also write $\Probup_u$ for $\Probup_{0,u}$.
\end{definition}
This denomination is justified by the following proposition.
\begin{proposition}
 For any $(x,u) \in D$ and $t>0$, we have
\begin{equation} \label{neverhitting}
\Probup_{x,u}(A) = \lim_{s\to \infty} \Probc_{x,u}(A|\Tri>s),
\end{equation}
for any $A \in \F_t$.
\end{proposition}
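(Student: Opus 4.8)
The plan is to derive \eqref{neverhitting} directly from the $h$-transform identity \eqref{eq_def_Pup} together with the tail estimate \eqref{goldie}. Fix $(x,u)\in D$, fix $t>0$, and let $A\in\F_t$. Applying \eqref{eq_def_Pup} with the stopping time $T=t$ gives
\begin{equation*}
\Probup_{x,u}(A) = \Inv{H(x,u)}\,\Prob_{x,u}^c\!\left(A\,H(X_t,\dot X_t)\,\un_{t<\Tri}\right).
\end{equation*}
On the other hand, by the Markov property at time $t$, on the event $\{t<\Tri\}$ we have $\Prob_{x,u}^c(\Tri>s\mid\F_t)=\Prob_{X_t,\dot X_t}^c(\Tri>s-t)$, so
\begin{equation*}
\Probc_{x,u}(A\mid\Tri>s) = \frac{\Prob_{x,u}^c\!\left(A\,\un_{t<\Tri}\,\Prob_{X_t,\dot X_t}^c(\Tri>s-t)\right)}{\Prob_{x,u}^c(\Tri>s)}.
\end{equation*}
Thus it suffices to show that, as $s\to\infty$, the ratio $\Prob_{x,u}^c(\Tri>s-t)\big/\Prob_{x,u}^c(\Tri>s)$ tends to $1$ (which handles the denominator and is immediate from regular variation once we know $\Prob_{x,u}^c(\Tri>s)\sim C\,s^{-k}$ for a starting-point-dependent constant $C$), and, more substantially, that for the numerator
\begin{equation*}
\frac{\Prob_{X_t,\dot X_t}^c(\Tri>s-t)}{\Prob_{x,u}^c(\Tri>s)}\;\longrightarrow\;\frac{H(X_t,\dot X_t)}{H(x,u)}\qquad\text{pointwise on }\{t<\Tri\},
\end{equation*}
after which an application of dominated convergence under $\Prob_{x,u}^c(A\,\un_{t<\Tri}\cdot)$ finishes the proof.

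To get the pointwise limit, the natural route is to upgrade Lemma~\ref{lemme_queueTri}: the estimate \eqref{goldie} is stated for the starting condition $(0,1)$, and I would first extend it to an arbitrary starting condition $(y,w)\in D$ in the sharp form
\begin{equation*}
\Prob_{y,w}^c(\Tri>s)\;\underset{s\to\infty}{\sim}\;C_1\,H(y,w)\,s^{-k}.
\end{equation*}
For $y=0$ this is just scaling: under $\Prob_{0,w}^c$ the process is $w^{2}$-time-scaled and $w$-space-scaled relative to $\Prob_{0,1}^c$, whence $\Prob_{0,w}^c(\Tri>s)=\Prob_{0,1}^c(\Tri>s/w^2)\sim C_1 w^{2k}s^{-k}=C_1 H(0,w)s^{-k}$ using $H(0,w)=w^{2k}$. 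For general $(y,w)\in D$ with $y>0$, decompose at the first hitting time $\Tr_1$ of $0$: on $\{\Tr_1<\Tri\}$ the process restarts afresh from $(0,\Vr_1)$, and since $\Tr_1<\infty$ a.s.\ and $\Tr_1$ has a lighter tail than $\Tri$ (its tail exponent is $1/4>k$, by \eqref{queue_TrrII}), we get $\Prob_{y,w}^c(\Tri>s)\sim \Prob_{y,w}^c\big(C_1\Vr_1^{2k}(s-\Tr_1)^{-k}\big)\sim C_1\,\Prob_{y,w}^c(\Vr_1^{2k})\,s^{-k}=C_1 H(y,w)s^{-k}$, where interchanging limit and expectation uses $\Prob_{y,w}^c(\Vr_1^{2k})=H(y,w)<\infty$ and a uniform-integrability/domination argument for the family $(s/(s-\Tr_1))^{k}\un_{\Tr_1\le s}$. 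With this refined equivalence in hand, the desired ratio limit is immediate, and its value is exactly $H(X_t,\dot X_t)/H(x,u)$.

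The main obstacle is making the dominated-convergence step rigorous, i.e.\ producing an integrable (under $\Prob_{x,u}^c(\un_{t<\Tri}\,\cdot\,)$) dominating function for the sequence $g_s:=\Prob_{X_t,\dot X_t}^c(\Tri>s-t)\big/\Prob_{x,u}^c(\Tri>s)$. Pointwise convergence of $g_s$ to $H(X_t,\dot X_t)/H(x,u)$ is the content of the refined equivalence above, and $H(X_t,\dot X_t)$ is integrable since $\Prob_{x,u}^c(H(X_t,\dot X_t)\un_{t<\Tri})=H(x,u)$ by harmonicity of $H$; but a clean uniform bound $g_s\le C\,H(X_t,\dot X_t)$ valid for all large $s$ needs care, because the $\sim$ in the refined equivalence is not a priori uniform in the starting point $(X_t,\dot X_t)$. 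The cleanest fix is to establish, alongside \eqref{goldie}, a two-sided bound $\Prob_{y,w}^c(\Tri>s)\le C\,H(y,w)(1+s)^{-k}$ holding uniformly over $(y,w)\in D$ (again by the $\Tr_1$-decomposition, reducing to the case $y=0$ handled by exact scaling), together with a matching lower bound $\Prob_{x,u}^c(\Tri>s)\ge c\,s^{-k}$; dividing these yields $g_s\le C' H(X_t,\dot X_t)$ for all $s$ large, and dominated convergence then applies. Everything else — the passage to $\Probc_{x,u}(A\mid\Tri>s)$, the cancellation of $H(x,u)$, and the identification of the limit with $\Probup_{x,u}(A)$ via \eqref{eq_def_Pup} — is then routine.
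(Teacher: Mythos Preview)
Your proposal is correct and follows essentially the same route as the paper: the paper first proves the refined tail estimate $s^k\Probc_{x,u}(\Tri>s)\to C_1 H(x,u)$ for all $(x,u)\in D$ (its Lemma~\ref{lemme_queuebis}), via exactly the two steps you outline --- scaling for $x=0$, then the Markov property at $\Tr_1$ for general $(x,u)$ --- and then plugs this into the Markov-property decomposition of $\Probc_{x,u}(A\mid\Tri>s)$ to recover the $h$-transform expression. The only difference is one of emphasis: the paper simply writes ``where the convergence holds by dominated convergence'' both in the lemma and (implicitly) in the proposition, whereas you spell out why a uniform dominating function exists; your discussion of the two-sided bound $\Probc_{y,w}(\Tri>s)\le C\,H(y,w)(1+s)^{-k}$ is the natural way to justify what the paper asserts, and is not a departure from its argument.
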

We stress that in \cite{rivero04}, Proposition 2, Rivero defines in
a similar way the self-similar Markov process conditioned on never
hitting 0. Incidentally, you can find in \citejacobIHP a thorough
study of other $h-$transforms regarding the Kolmogorov process
killed at time $\Tr_1$.

In order to get Formula~\eqref{neverhitting}, we first prove the
following lemma, which is a slight improvement of \eqref{goldie}:
\begin{lemma} \label{lemme_queuebis}
For any $(x,u) \in D$,
\begin{equation} \label{goldiegeneralise}
s^k \Probc_{x,u}(\Tri > s) \underset {s\to \infty} \To H(x,u) C_1.
\end{equation}
\end{lemma}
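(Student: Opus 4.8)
The plan is to reduce the general estimate \eqref{goldiegeneralise} to the special case \eqref{goldie} already established for the starting condition $(1,0)$ — or rather to the law $\Prob_1^c$ — by conditioning on the first hitting time of zero and using the strong Markov property together with the scaling structure of the model. The first observation is that for any $(x,u)\in D$ with $x>0$, under $\Prob_{x,u}^c$ the process reaches $0$ at the finite time $\Tr_1$ with some (outgoing) velocity $\Vr_1=\dot X_{\Tr_1}>0$, and by the strong Markov property, conditionally on $\F_{\Tr_1}$, the remaining time to absorption $\Tri-\Tr_1$ has the same law as $\Tri$ under $\Prob_{0,\Vr_1}^c$. When $x=0$ the same holds trivially with $\Tr_1=0$ and $\Vr_1=u$. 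By the scaling property of the Kolmogorov process — the same scaling that underlies the i.i.d.\ structure of $(\Tr_{n+1}-\Tr_n)/\Vr_n^2,\ \Vr_{n+1}/\Vr_n)$ — absorption time under $\Prob_{0,v}^c$ is distributed as $v^2$ times absorption time under $\Prob_{0,1}^c=\Prob_1^c$. Hence
\begin{equation} \label{eq_plan_decomp}
\Prob_{x,u}^c(\Tri>s) = \Prob_{x,u}^c\big(\Tr_1 + \Vr_1^2 \,\widehat\Tri > s\big),
\end{equation}
where, conditionally on $\F_{\Tr_1}$, $\widehat\Tri$ has the law of $\Tri$ under $\Prob_1^c$, independent of $(\Tr_1,\Vr_1)$.

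Next I would multiply \eqref{eq_plan_decomp} by $s^k$ and let $s\to\infty$. On the event $\{\Tr_1<\infty\}$ (which has full measure), for fixed $(\Tr_1,\Vr_1)$ we have $s^k\Prob_1^c(\widehat\Tri > (s-\Tr_1)/\Vr_1^2) \to \Vr_1^{2k} C_1$ by \eqref{goldie}, since $(s-\Tr_1)/\Vr_1^2\sim s/\Vr_1^2\to\infty$ and $s^k\big((s-\Tr_1)/\Vr_1^2\big)^{-k}\to \Vr_1^{2k}$. So the integrand $s^k\,\Prob_1^c(\widehat\Tri>(s-\Tr_1)/\Vr_1^2)$ converges pointwise to $C_1\,\Vr_1^{2k}$, and if we can pass the limit under the expectation $\Prob_{x,u}^c(\cdot)$ we obtain
\begin{equation} \label{eq_plan_limit}
s^k\,\Prob_{x,u}^c(\Tri>s) \To C_1\,\Prob_{x,u}^c(\Vr_1^{2k}) = C_1\,H(x,u),
\end{equation}
which is exactly \eqref{goldiegeneralise}, recalling the definition $H(x,u)=\Prob_{x,u}^c(\Vr_1^{2k})$.

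The main obstacle is thus the interchange of limit and expectation, i.e.\ justifying dominated (or uniformly integrable) convergence. Two points need care. First, one needs a uniform bound of the form $s^k\,\Prob_1^c(\Tri>t) \le K$ for all $t\ge t_0$; this follows from \eqref{goldie} but must be combined with control for small $t$, where $\Prob_1^c(\Tri>t)\le 1$ gives $s^k\,\Prob_1^c(\widehat\Tri>(s-\Tr_1)/\Vr_1^2)\le s^k$ — not bounded — so one must instead dominate by $K'(1 + (\Tr_1/\Vr_1^2)^k)\,\Vr_1^{2k} = K'(\Vr_1^{2k} + \Tr_1^k)$ uniformly in $s$, using that $s^k\Prob_1^c(\Tri>r) \le K'(1+r^{-k}s^k)$ for all $r>0$ (trivially when $r^{-k}s^k\ge 1$, and by \eqref{goldie}-type bound otherwise). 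This dominating function is integrable under $\Prob_{x,u}^c$ precisely because $\Prob_{x,u}^c(\Vr_1^{2k})=H(x,u)<\infty$ and $\Prob_{x,u}^c(\Tr_1^k)<\infty$ — the latter being the queue estimate \eqref{queue_TrrII} for $\Tr_1$ together with $k<1/4$, exactly the integrability input already used in the proof of Lemma~\ref{lemme_queueTri}. Second, one should check the boundary behaviour is handled: for $x=0$ the decomposition is trivial and \eqref{eq_plan_limit} reads $s^k\Prob_u^c(\Tri>s)\to C_1 u^{2k}=C_1 H(0,u)$, consistent with $H(0,u)=u^{2k}$. With the domination in hand, dominated convergence yields \eqref{eq_plan_limit} and the lemma follows.
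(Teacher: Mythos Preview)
Your proof is correct and follows essentially the same route as the paper's: strong Markov at $\Tr_1$, scaling to reduce to $\Prob_1^c$, pointwise convergence via \eqref{goldie}, and dominated convergence to pass the limit inside the expectation, giving $C_1\,\Prob_{x,u}^c(\Vr_1^{2k})=C_1 H(x,u)$. The paper is terser---it simply asserts dominated convergence without exhibiting a majorant---whereas you explicitly produce the bound $K'(\Vr_1^{2k}+\Tr_1^k)$; note only that the finiteness of $\Prob_{x,u}^c(\Tr_1^k)$ for general $(x,u)\in D$ requires the $t^{-1/4}$ tail of $\Tr_1$ under $\Prob_{x,u}^c$, not just under $\Prob_1^c$ as in \eqref{queue_TrrII} (this is classical, but \eqref{queue_TrrII} alone does not cover it).
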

\begin{proof}
For $(x,u)=(0,1)$, this is  \eqref{goldie}. For $x=0$, the rescaling
invariance property yields immediately
$$ s^k \Probc_{0,u}(\Tri > s)= s^k \Probc_{0,1}(\Tri > s u^{-2}) \underset{s\to \infty} \To u^{2k} C_1 = H(0,u) C_1.$$
For $(x,u) \in D$, the Markov property at time $\Tr_1$ yields
\begin{eqnarray*}
s^k \Probc_{x,u}(\Tri > s) &=& \Probc_{x,u}(s^k \Probc_{0,\Vr_1}(\Tri > s-\Tr_1)) \\
& \underset{s\to \infty} \To& \Probc_{x,u}(H(0,\Vr_1) C_1) = H(x,u)
C_1,
\end{eqnarray*}
where the convergence holds by dominated convergence. The lemma is
proved. \end{proof}

Formula~\eqref{neverhitting} then results from:
\begin{eqnarray*}
\Probc_{x,u}(A|\Tri > s) &=& \Inv{\Probc_{x,u}(\Tri > s)} \Probc_{x,u}\left( A \Probc_{X_t,\dot X_t}(\Tri>s-t), \Tri>t\right) \\
 &\underset{s\to \infty} \To& \Inv {H(x,u)} \Probc_{x,u}\left(A H(X_T,\dot X_T),\Tri>t\right) \\
 &=& \ \Probup_{x,u}(A).
\end{eqnarray*}


\subsection{Starting the conditioned process from $(0,0)$} \label{section_Pupz}
The study of the reflected Kolmogorov process conditioned on never
hitting $(0,0)$ will happen to be very similar to that of the
reflected Kolmogorov process in the supercritical case $c>\ccr$,
done in \citejacobRLPI. Observe the following similarities between
the laws $\Probup_u$, and $\Probc_u$ when $c>\ccr$: the sequence
$\left( \dfrac {\Tr_{n+1}- \Tr_n} {\Vr_n^2}, \dfrac {\Vr_{n+1}}
{\Vr_n} \right)_{n\ge0}$ is i.i.d., we know its law explicitly, and
the sequence $S_n=\ln(\Vr_n)$ is a random walk with positive drift.
It follows that a major part of \citejacobRLPI can be transcribed
\emph{mutatis mutandis}. In particular we will get a convergence
result for the probabilities $\Probup_u$ when $u$ goes to zero,
similar to Theorem~1 of \citejacobRLPI.

\espace

Under $\Probup_1$, the sequence $(S_n)_{n\ge0}$ is a random walk of
law $\Proup_0$. Write $\muup$ for its drift, that is the expectation
of its jump distribution, which is positive and finite. The
associated strictly ascending ladder height process $(H_n)_{n\ge0}$,
defined by $H_k = S_{n_k}$, where $n_0=0$ and
$n_k=\inf\{n>n_{k-1},S_n>S_{n_{k-1}}\}$, is a random walk with
positive jumps. Its jump distribution also has positive and finite
expectation $\muup_H \ge \muup$. The measure
\begin{equation}\label{def_overup}
\overup(\de y):= \Inv {\muup_H} \Proup_0(H_1>y) \de y.
\end{equation}
is the ``stationary law of the overshoot'', both for the random
walks $(S_n)_{n\ge0}$ and $(H_n)_{n\ge0}$. The following proposition
holds.

\begin{proposition} \label{proposition_theoRLP}
The family of probability measures $(\Probup_{x,u})_{(x,u) \in D}$
on $\Ccal$ has a weak limit when $(x,u) \to (0,0)$, which we denote
by $\Pupz$. More precisely, write $\tau_v$ for the instant of the
first bounce with speed greater than $v$, that is
$\tau_v:=\inf\{t>0, X_t=0, \dot X_t
> v\}.$ Then the law $\Pupz$ satisfies the following properties:
$$\begin{array}{ll}
(*) &  \left\{\begin{array}{l}
\displaystyle \lim_{v \to 0^+ } \tau_v = 0 \quad \text{almost surely}. \\
\text{For any }u, v>0 \text{, and conditionally on }\dot X_{\tau_v}
= u\text{, the process } \\
(X_{\tau_v+ t}, \dot X_{\tau_v+ t})_{t\ge0} \text{ is independent of
} (X_s, \dot X_s)_{s< \tau_v} \text{ and has law }\Probup_u.
\end{array}
 \right. \\
 \\
(**)  & \text{For any }v>0, \text{ the law of }\ln(\dot X_{\tau_v}
/v) \text{ is } \overshoot.
 \end{array} $$
\end{proposition}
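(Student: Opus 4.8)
The plan is to transcribe the argument of Theorem~1 of \citejacobRLPI, exploiting the parallel pointed out above: under $\Probup_u$ the sequence $\bigl((\Tr_{n+1}-\Tr_n)/\Vr_n^2,\ \Vr_{n+1}/\Vr_n\bigr)_{n\ge0}$ is i.i.d.\ with a law one can write down explicitly from \eqref{loijointe_VrTrII}, and under $\Proup_0$ the sequence $S_n=\ln\Vr_n$ is a random walk with positive finite drift $\muup$. Throughout, recall $\tau_v=\inf\{t>0:X_t=0,\ \dot X_t>v\}$ and set $N_v=\inf\{n\ge1:\Vr_n>v\}$, so that $\tau_v=\Tr_{N_v}$.

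\emph{Step 1: reduction to $x=0$.} Taking $T=\Tr_1$ in \eqref{eq_def_Pup} and using $H(0,\Vr_1)=\Vr_1^{2k}$, the law under $\Probup_{x,u}$ of the pair $\bigl((X_t,\dot X_t)_{t\le\Tr_1},\,\Vr_1\bigr)$ is the corresponding law under $\Probc_{x,u}$ biased by $\Vr_1^{2k}/H(x,u)$. By the scaling invariance of the Kolmogorov process, as $(x,u)\to(0,0)$ in $D$ one has, under $\Probc_{x,u}$, that $\Tr_1\to0$ and $\Vr_1\to0$ in probability and the stopped path $(X_{\cdot\wedge\Tr_1},\dot X_{\cdot\wedge\Tr_1})$ converges to the constant path at $(0,0)$; moreover a scaling computation gives $\Probc_{x,u}(\Vr_1^{2k+\delta})/\Probc_{x,u}(\Vr_1^{2k})\to0$ for small $\delta>0$, so the size-biasing does not destroy these convergences and they hold under $\Probup_{x,u}$ as well. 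Hence, by the strong Markov property of $\Probup$ at $\Tr_1$, it suffices to prove that $\Probup_{0,u}$ converges weakly as $u\to0^+$ and to identify the limit; the general statement then follows by gluing the (asymptotically trivial) initial path to this limit.

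\emph{Step 2: the candidate limit via $\tau_v$.} Fix $v>0$ and $u<v$. Under $\Probup_{0,u}$ one has $S_0=\ln u$, and $\ln(\dot X_{\tau_v}/v)=S_{N_v}-\ln v$ is the overshoot of the positive-drift walk $(S_n)$ above level $\ln v$ started from $\ln u\to-\infty$; by the renewal theorem applied to the strictly ascending ladder height process $(H_n)$, this overshoot converges in law, as $u\to0^+$, to the stationary overshoot $\overshoot$ of \eqref{def_overup} (this gives $(**)$). On the other hand, by the strong Markov property of $\Probup$ at the stopping time $\tau_v$, conditionally on $\dot X_{\tau_v}=w$ the shifted trajectory $(X_{\tau_v+t},\dot X_{\tau_v+t})_{t\ge0}$ has law $\Probup_w$, independently of $(X_s,\dot X_s)_{s\le\tau_v}$ (this gives $(*)$, second part). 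These two facts pin down the limit, as $u\to0$, of the law of the shifted trajectory: draw $W\sim\overshoot$ and run $\Probup_{ve^W}$. Checking that these descriptions are compatible as $v$ decreases --- which reduces to the renewal-theoretic fact that $\overshoot$ is stationary for the overshoot of $(S_n)$ --- and that $\tau_v\to0$ almost surely, one obtains a single well-defined law $\Pupz$ on $\Ccal$, genuinely starting from $(0,0)$. For the limit $\tau_v\to0$: write $\tau_v=\Tr_{N_v}$ as the sum of the inter-bounce times $\Vr^2\,\xi$ over the bounces occurring before $\tau_v$, all of which have velocity $\le v$; grouping them by dyadic scales of the velocity, each scale being visited only a bounded-in-expectation number of times because $(S_n)$ is transient to $+\infty$, yields $\Probup_{0,u}(\tau_v)=O(v^2)$ uniformly in $u$, hence $\Pupz(\tau_v)=O(v^2)$, and a Borel--Cantelli argument along $v=2^{-m}$ together with the monotonicity of $v\mapsto\tau_v$ gives the almost sure statement.

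\emph{Step 3: weak convergence $\Probup_{0,u}\Rightarrow\Pupz$.} It remains to prove the convergence in law of $\dot X$ in Skorokhod space. I would first establish tightness of $(\Probup_{0,u})_{u\in(0,1]}$, controlled by the explicit law \eqref{loijointe_VrTrII} of the rescaled increments and the tail estimate of Lemma~\ref{lemme_queuebis}, exactly as in \citejacobRLPI. Then every subsequential limit is identified with $\Pupz$ by combining the fixed-$v$ convergence of the shifted trajectory $(X_{\tau_v+t},\dot X_{\tau_v+t})_{t\ge0}$ obtained in Step~2 with the fact --- established alongside the tightness --- that the piece of trajectory on $[0,\tau_v)$ is uniformly negligible: there $\tau_v$, $\sup X$ and $\sup|\dot X|$ are $O(v^2)$, $O(v^2)$ and $O(v)$ respectively, uniformly in $u\in(0,v)$, so that this piece disappears in the Skorokhod topology as $v\to0$. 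The interchange of the limits $u\to0$ and $v\to0$ required by this last point, together with the uniform control of the pre-$\tau_v$ excursion, is the delicate step of the proof; all the remaining ingredients are straightforward transcriptions of \citejacobRLPI.
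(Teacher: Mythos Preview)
Your overall skeleton matches the paper's --- reduce to $x=0$, build the limit law from the overshoot of the random walk $(S_n)$, then control the piece on $[0,\tau_v)$ --- but the control of that pre-$\tau_v$ piece contains genuine gaps, and this is precisely where the paper warns that the argument of \citejacobRLPI \emph{cannot} be transcribed.

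First, your Step~2 claim $\Probup_{0,u}(\tau_v)=O(v^2)$ is false: the paper computes (Formula~\eqref{queue_Trr_up}) that under $\Probup_1$ the inter-bounce time $\Tr_1$ has tail $\sim c't^{\,k-1/4}$ with $k\in(0,1/4)$, so $\Probup_1(\Tr_1)=+\infty$. Hence every term in your sum has infinite mean and the first-moment/Borel--Cantelli route to $\tau_v\to0$ does not work. Second, your Step~3 claim that $\sup_{[0,\tau_v)}|\dot X|=O(v)$ is unjustified: between bounces $\dot X$ is a Brownian motion run for a heavy-tailed time, so there is no a priori reason the velocity stays of order $v$ just because the \emph{bouncing} velocities do. The paper is explicit that in \citejacobRLPI this control (and the proof that $(X_t,\dot X_t)\to(0,0)$ under the candidate limit) was obtained via the $(SOR)$ equations, which the $h$-transformed laws $\Probup$ do \emph{not} satisfy; a new argument is required.

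The missing ingredient is a uniform ``overshoot lower bound'' (Lemma~\ref{P_x,1} in the paper): for all $(x,u)\in D$,
\[
\Probup_{x,u}\!\left(\Vr_1/c\ge \tfrac{|u|}{2}\right)\ \ge\ 1-\tfrac{\sqrt3}{\pi},
\]
proved from Gor'kov's density for $\Vr_1$ under $\Probc_{x,u}$ together with the monotone effect of the $v^{2k}$ biasing. This lemma is what replaces the SDE argument: if $|\dot X|$ ever exceeds $\delta$ before $\tau_v$, then with probability bounded below independently of the current state the next bounce has speed $\ge c\delta/2$, so $\dot X_{\tau_v}\ge c\delta/2$. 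Since the law of $\dot X_{\tau_v}$ is (asymptotically) the overshoot law concentrated near $v$, one obtains both Lemma~\ref{convergence_X,X_0,0} ($(X_t,\dot X_t)\to(0,0)$ under $\Pupz^*$) and Lemma~\ref{lemma_majoration_M_u} (uniform-in-$u$ smallness of $\Probup_u(M_{v_0}\ge\delta)$). Your sketch should incorporate this lemma and use it in place of the incorrect moment and $O(v)$ claims.
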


In the proof of this proposition we can take $x=0$ and just prove
the convergence result for the laws $\Probup_u$ when $u \to 0+$. The
general result will follow as an application of the Markov property
at time $\Tr_1$.

The complete proof follows mainly the proof of Theorem~1 in
\citejacobRLPI and takes many pages. Here, the reader has three
choices. Skip this proof and go directly to next section about the
resurrected process. Or read the following for an overview of the
ideas of the proof, with details given only when significantly
different from that in \citejacobRLPI. Or, read \citejacobRLPI and
the following, if (s)he wants to get the complete proof.

\espace \espace

Call $T_y(S)$ the hitting time of $(y,\infty)$ for the random walk
$S$ starting from $x<y$. Call $\Proup_{\muup}$ the law of
$(S_n)_{n\ge0}$ obtained by taking $S_0$ and $(S_n - S_0)_{n\ge 0}$
independent, with law $\overup$ and $\Proup_0$, respectively. That
is, we allow the starting position to be nonconstant and distributed
according to $\muup$. A result of renewal theory states that the law
of the overshoot $(S_{n+T_y} - y)_{n\ge0}$ under $\Proup_x$, when
$x$ goes to $-\infty$, converges to $\Proup_m$. Now, for a process
indexed by $I$ an interval of $\Z$, we define a spatial translation
operator by $\Thetasp_y((S_n)_{n\in I}) = (S_{n+T_y} - y)_{n \in
{I-T_y}}$. We get that under $\Proup_x$ and when $x$ goes to
$-\infty$, the translated process $\Thetasp_y(S)$ converges to a
process called the ``spatially stationary random walk", a process
indexed by $\Z$ which is spatially stationary and whose restriction
to $\N$ is $\Proup_m$ (see \citejacobRLPI). We write $\Proup$ for
the law of this spatially stationary random walk.

\espace

There exists a link between the law $\Proup_x$ and the law
$\Probup_{e^x}$: the first one is the law of the underlying random
walk $(S_n)_{n\ge0} = (\ln \Vr_n)_{n\ge0}$ for a process $(X,\dot
X)$ following the second one. Now, in a very brief shortcut, we can
say that the law $\Proup$ is linked to a law written $\Pupz^*$. And
the convergence results of $\Proup_x \circ \Thetasp_y$ to $\Pro$
when $x\to -\infty$ provide convergence results of $\Probup_u$ to
$\Pupz^*$ when $u\to 0$.

However, this link is different, as the spatially stationary random
walk, of law $\Proup$, is a process indexed by $\Z$. The value $S_0$
is thus not equal to the logarithm of the velocity of the process at
time 0, but at time $\tau_1$ (recall that $\tau_1= \inf\{t>0, X_t=0,
\dot X_t\ge1\}$ is the instant of the first bounce with speed no
less than one). The sequence $(S_n)_{n\ge0}$ is then the sequence of
the logarithms of the velocities of the process at the bouncing
times, starting from that bounce. The sequence $(S_{-n})_{n\ge0}$ is
the sequence of the logarithms of the velocities of the process at
the bouncing times happening \emph{before} that bounce. \espace

The law $\Pupz^*$ is the law of a process indexed by $\Real_+^*$,
but we actually construct  it ``from the random time $\tau_1$''. In
order for the definition to be clean, we have to prove that the
random time $\tau_1$ is finite a.s. In \citejacobRLPI, we used the
fact that if $(\Tr_{1,k})_{k\ge0}$ is a sequence of i.i.d random
variables, with common law that of $\Tr_1$ under $\Probc_1$, then
for any $\eps>0$ there is almost surely only a finite number of
indexes $k$ such that $\ln(\Tr_{1,k})\ge \eps k.$ This was based on
Formula~\ref{queue_TrrII}, which, we recall, states
$$ \Prob_1^c(\Tr_1>t) \underset{t\to \infty} \sim c' t^{-\Inv 4},$$
where $c'$ is some positive constant. Here the same results holds
with replacing $\Probc_1$ by $\Probup_1$ and is a consequence from
the following lemma.
\begin{lemma}
We have  
\begin{equation} \label{queue_Trr_up} \Probup_1(\Tr_1>t) \underset{t\to \infty} \sim c' t^{k-\Inv
4},
\end{equation}
where $c'$ is some positive constant.
\end{lemma}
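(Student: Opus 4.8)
The plan is to transfer the tail estimate \eqref{queue_TrrII} for $\Tr_1$ under $\Probc_1$ to the conditioned law $\Probup_1$ by unwinding the $h$-transform. Recall that $\Tr_1$ depends only on $\F_{\Tr_1}$, so by the defining relation \eqref{eq_def_Pup} applied at the stopping time $T=\Tr_1$ (with $H(0,1)=1$), we have for any bounded measurable $f$,
$$\Probup_1\bigl(f(\Tr_1)\bigr) = \Probc_1\bigl(f(\Tr_1)\, H(X_{\Tr_1},\dot X_{\Tr_1})\bigr) = \Probc_1\bigl(f(\Tr_1)\, H(0,\Vr_1)\bigr) = \Probc_1\bigl(f(\Tr_1)\, \Vr_1^{2k}\bigr),$$
using $H(0,u)=u^{2k}$. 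Thus the law of $\Tr_1$ under $\Probup_1$ is the law of $\Tr_1$ under $\Probc_1$ weighted by $\Vr_1^{2k}$, and \eqref{queue_Trr_up} is equivalent to showing
$$\Probc_1\bigl(\Vr_1^{2k}\, \un_{\Tr_1>t}\bigr) \underset{t\to\infty}{\sim} c'\, t^{k-1/4}.$$

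First I would bring in the joint density \eqref{loijointe_VrTrII} of $(\Tr_1,\Vr_1/c)$. Writing $w=v/c$ ... actually it is cleaner to change variables and integrate the density directly: the quantity to estimate is $\int_0^\infty \de v\, v^{2k}\int_t^\infty \de s\, g(s,v)$, where $g(s,v)$ is $c^{-1}$ times the density in \eqref{loijointe_VrTrII} expressed in the variable $\Vr_1$ rather than $\Vr_1/c$ — i.e. with $v$ replaced by $v/c$ throughout \eqref{loijointe_VrTrII}. The inner integral $\int_t^\infty g(s,v)\,\de s$ has, for fixed $v$, a tail governed by the $s^{-2}\exp(-2(v^2-v+1)/s)$ prefactor times the inner $\theta$-integral, which behaves like a constant times $\min(1, v/s)^{1/2}\cdot(\text{stuff})$ for large $s$; the net effect is that $\int_t^\infty g(s,v)\,\de s \sim \kappa(v)\, t^{-3/2}\cdot\sqrt{t}$-type corrections, and one checks exactly as in the proof of \eqref{queue_TrrII} (Lemma~1 of \citejacobRLPI) that $\Probc_1(\Tr_1>t)\sim c' t^{-1/4}$. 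The point here is to track the extra factor $v^{2k}$ through the $v$-integration. The $\exp(-2(v^2-v+1)/s)$ term forces $v$ to be of order $\sqrt{s}\asymp\sqrt{t}$ in the relevant regime, so integrating $v^{2k}$ against the $v$-profile picks up an extra factor of order $t^{k}$ relative to the $k=0$ computation. Combining, $\Probc_1(\Vr_1^{2k}\un_{\Tr_1>t})\sim c' t^{k-1/4}$, which is \eqref{queue_Trr_up}. One must also verify the constant $c'$ is finite and positive, which follows since the integral defining it converges (here $k<1/4$ guarantees integrability, exactly as in the bound $\Probc_1(\Tr_1^k)<\infty$ used in Lemma~\ref{lemme_queueTri}).

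Alternatively, and perhaps more robustly, I would invoke a Tauberian/Karamata argument: the asymptotics $\Probc_1(\Tr_1>t)\sim c' t^{-1/4}$ means $\Tr_1$ is in the domain of attraction of a stable law of index $1/4$, and the reweighting by $\Vr_1^{2k}$ (a variable with light-enough tails relative to $\Tr_1$, given the joint density factorizes asymptotically in the relevant scaling regime) multiplies the tail by a regularly varying factor $t^{k}$. Concretely one can use the exact joint density to write a Laplace- or Mellin-transform identity, extract the leading singularity, and read off the exponent $k-1/4$ and the constant.

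The main obstacle will be the explicit asymptotic analysis of the double integral coming from \eqref{loijointe_VrTrII}: one must correctly identify the joint scaling regime $v\asymp\sqrt{s}\asymp\sqrt{t}$ in which the mass concentrates, handle the incomplete-Gamma-type inner integral $\int_0^{4v/s}e^{-3\theta/2}\,\de\theta/\sqrt{\pi\theta}$ (which transitions between behaving like $\sqrt{v/s}$ and like a constant), and check uniform integrability of the error terms so that the $v^{2k}$ weighting genuinely contributes a clean factor $t^{k}$ and nothing more. Everything else — the reduction via the $h$-transform and the identification $H(0,u)=u^{2k}$ — is immediate from the material already established.
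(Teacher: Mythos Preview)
Your approach is essentially the same as the paper's: both unwind the $h$-transform to reduce to a $\Vr_1^{2k}$-weighted tail of the joint density \eqref{loijointe_VrTrII}, identify the relevant scaling $v\asymp\sqrt{s}$, and read off the extra factor $t^{k}$. The paper's execution is cleaner in one respect: rather than integrating in $s$ first for fixed $v$ as you sketch, it integrates out $v$ first via the substitution $w=v^2/s$ (after sandwiching the inner $\theta$-integral by $4\sqrt{v/(s\pi)}e^{-6v/s}$ and $4\sqrt{v/(s\pi)}$), which gives the marginal density of $\Tr_1$ under $\Probup_1$ directly as a constant times $s^{-5/4+k}$ by dominated convergence, and then integrates once in $s$ --- this sidesteps the loose ``$\kappa(v)\,t^{-3/2}\cdot\sqrt{t}$-type corrections'' step in your sketch and yields the constant $c'$ explicitly.
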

\begin{proof} 
From \eqref{eq_def_Pup} and \eqref{loijointe_VrTrII}, we get that
the density of $(\Tr_1,\Vr_1/c)$ under $\Probup_1$ is given by
\begin{eqnarray*}
 f(s,v):= \Inv {\de s \de v} \Probup_1((\Tr_1,\Vr_1/c) \in \de s \de v) &=& (cv)^{2 k} \frac {3 v} {\pi \sqrt2 s^2} \exp(-2\frac{v^2-v+1}{s}) \int_0^{\frac {4v} s} e^{-\frac {3 \theta} 2} \frac {\de \theta} {\sqrt {\pi \theta}}.
\end{eqnarray*}
Thanks to the inequality
$$4 \sqrt{\frac v {s\pi}} e^{-\frac {6 v} s}  \le \int_0^{\frac {4v} s} e^{-\frac {3 \theta} 2} \frac {\de \theta} {\sqrt {\pi \theta}} \le 4 \sqrt{\frac v {s\pi}},$$
we may write
\begin{eqnarray*}
 f(s,v)&=& (6\sqrt 2 . \pi^{-\frac32}c^{2k})s^{-\frac52}v^{\frac32+2k} e^{-2\frac{v^2}s + \frac v s K(s,v)},
\end{eqnarray*}
where $(s,v) \mapsto K(s,v)$ is continuous and bounded. The marginal
density of $\Tr_1$ is thus given by
\begin{eqnarray*}
\Inv {\de s} \Probup_1(\Tr_1 \in \de s) &=& \int_{\Real_+} f(s,v) \de v \\
 &=& (3\sqrt 2 . \pi^{-\frac32}c^{2k}) s^{-\frac 54 + k} \int_{\Real_+} w^{\Inv 4 + k} e^{-2 w + K(s,\sqrt {sw}) \sqrt {w/s}} \de w \\
 & \underset{s\to \infty} \sim& (3\sqrt 2 . \pi^{-\frac32}c^{2k}) s^{-\frac 54 + k} \int_{\Real_+} w^{\Inv 4 + k} e^{-2 w} \de w,
\end{eqnarray*}
where we used successively the change of variables $w=v^2/s$ and
dominated convergence theorem. Just integrate this equivalence in
the neighborhood of $+\infty$ to get
$$\Probup_1(\Tr_1 >t) \underset{t\to \infty}\sim c' t^{k-\Inv 4},$$
with the constant
 $$c'= \frac {3\sqrt 2 .
\pi^{-\frac32}c^{2k}} {\Inv 4 -k} \int_{\Real_+} w^{\Inv 4 + k}
e^{-2 w} \de w = \frac {3 c^{2k}} {\pi^{\frac 32} 2^{\frac 34+k}}
\cdot \frac {1+4k} {1-4k} \ \Gamma\left(\Inv 4 + k\right).$$
\end{proof}

For now, we have introduced $\Pupz^*$, law of a process $(X,\dot X)$
indexed by $\Real_+^*$. We keep on following the proof of
\citejacobRLPI. First, we get that this law satisfies conditions
$(*)$ and $(**)$, and that for any $v>0$, the joint law of $\tau_v$
and $(X_{\tau_v+t}, \dot X_{\tau_v + t})_{t\ge 0}$ under $\Probup_u$
converges to that under $\Pupz^*$. Then we establish
Proposition~\ref{proposition_theoRLP} by controlling the behavior of
the process just after time 0, through the two following lemmas:
\begin{lemma}\label{convergence_X,X_0,0}
Under $\Pupz^*$, we have almost surely $(X_t,\dot X_t) \underset
{t\to 0} {\To} (0,0).$
\end{lemma}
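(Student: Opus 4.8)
The plan is to show that under $\Pupz^*$ the process starts continuously from $(0,0)$, using the structure of the process ``built from the random time $\tau_1$'' together with the convergence $\tau_v \to 0$ as $v \to 0^+$ (property $(*)$) and the overshoot description (property $(**)$). First I would recall that under $\Pupz^*$ the bouncing times form a sequence accumulating at $0$ from the right: by $(*)$, for every $v>0$ the first bounce $\tau_v$ with speed exceeding $v$ is finite, and $\tau_v \to 0$ almost surely as $v \downarrow 0$. Fix a sequence $v_j \downarrow 0$; then $\tau_{v_j} \downarrow 0$ almost surely. It therefore suffices to control the process on each random interval $[\tau_{v_{j+1}}, \tau_{v_j}]$ and show the supremum of $|X|$ and $|\dot X|$ over $(0, \tau_{v_j}]$ tends to $0$ as $j \to \infty$.

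Next I would use property $(*)$: conditionally on $\dot X_{\tau_{v_j}} = u$, the post-$\tau_{v_j}$ piece has law $\Probup_u$, and by the rescaling invariance (the i.i.d.\ structure of $(\frac{\Tr_{n+1}-\Tr_n}{\Vr_n^2}, \frac{\Vr_{n+1}}{\Vr_n})$ and self-similarity) a piece of $\Probup_u$ on a short initial time interval is a rescaled copy of a $\Probup_1$ piece. Concretely, on $[0, \tau_{v_j}]$ the velocity process stays bounded in absolute value by something of order $v_j$ times a tight random factor (the overshoot of the underlying random walk at level $\ln v_j$, which by $(**)$ has the fixed law $\overshoot$, independent of $j$, plus the fluctuations of a Brownian motion between two consecutive bounces, which scale like $v_j$ times an i.i.d.\ factor). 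Hence $\sup_{s \le \tau_{v_j}} |\dot X_s| \le v_j \cdot R_j$ where $R_j$ is tight (in fact its law does not blow up in $j$), so $\sup_{s \le \tau_{v_j}} |\dot X_s| \to 0$ a.s.\ along a subsequence, and since $X_s = \int_0^s \dot X_r\,\de r$ on $[0,\tau_{v_j}]$ (as $X_0 = 0$ under $\Pupz^*$ and $X$ vanishes at every bounce), we get $\sup_{s \le \tau_{v_j}} |X_s| \le \tau_{v_j} \sup_{s\le\tau_{v_j}} |\dot X_s| \to 0$ as well. Because $\tau_{v_j} \downarrow 0$, this gives $(X_t, \dot X_t) \to (0,0)$ as $t \to 0$.

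To make the tightness of $R_j$ rigorous I would argue as follows. Between $\tau_{v_{j+1}}$ and $\tau_{v_j}$ the velocity at the intermediate bounces is, by construction, at most $v_j$ in absolute value (that is the meaning of $\tau_{v_j}$ being the first bounce above $v_j$), while at time $\tau_{v_{j+1}}$ itself the speed is $v_{j+1} e^{O_{j+1}}$ with $O_{j+1} \sim \overshoot$. The only source of values above $v_j$ inside the open interval is the Brownian excursion of $\dot X$ between two consecutive zeros of $X$; here I would invoke the estimate \eqref{queue_Trr_up} (or rather its $\Probup_u$ rescaled analogue) controlling the length of such an excursion, together with a standard maximal inequality for Brownian motion run for that length, to bound the probability that $\sup |\dot X|$ on $[\tau_{v_{j+1}},\tau_{v_j}]$ exceeds $\lambda v_j$ by something summable in $j$ for $\lambda$ large, and then apply Borel--Cantelli.

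The main obstacle I expect is precisely this last point: controlling the \emph{overshoot above $v_j$ achieved during a single inter-bounce Brownian excursion}, uniformly in $j$, and combining it with the spatially stationary structure so that the bound is genuinely $j$-free up to a tight multiplicative constant. This is the two-dimensional analogue of the one-dimensional overshoot/excursion-height control used in \citejacobRLPI, and I would follow that argument \emph{mutatis mutandis}, the new input being the tail estimate \eqref{queue_Trr_up} for $\Probup_1$ in place of \eqref{queue_TrrII} for $\Probc_1$; everything else (the $(*)$, $(**)$ structure, self-similarity, Borel--Cantelli) is routine.
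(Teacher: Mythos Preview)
Your proposal takes a direct route—bound $\sup_{s\le\tau_{v_j}}|\dot X_s|$ by $v_j$ times a tight factor, then use Borel--Cantelli—whereas the paper argues by contradiction. The paper's proof supposes $\dot X_t\not\to 0$, so by self-similarity $K:=\Pupz^*(T_1=0)>0$ with $T_1=\inf\{t>0:|\dot X_t|>1\}$; it then applies the Markov property at $T_1^\eps:=\inf\{t>\eps:|\dot X_t|>1\}$ together with the key uniform bound of Lemma~\ref{P_x,1}, namely $\Probup_{x,u}(\Vr_1/c\ge|u|/2)\ge 1-\sqrt3/\pi$, to force $\Pupz^*(\tau_{c/2}^-\le\eta)\ge K'>0$ for every $\eta>0$, contradicting $\tau_{c/2}^->0$ a.s. The whole argument is four lines once Lemma~\ref{P_x,1} is available; no quantitative excursion estimates or Borel--Cantelli are needed.

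Your approach is not unreasonable, but it has a real gap precisely where you flag the ``main obstacle''. Under $\Probup_u$ the velocity between bounces is \emph{not} a Brownian motion: the $h$-transform by $H$ genuinely changes the law of the inter-bounce path (it becomes an $H$-Doob transform of the free Kolmogorov process), so a ``standard maximal inequality for Brownian motion'' does not apply directly. The paper in fact states explicitly, just before Lemma~\ref{P_x,1}, that the SDE-based arguments used in \citejacobRLPI for $\Probc$ are not available for $\Probup$, which is why a new proof is needed. You could try to recover Brownian estimates by noting that, conditionally on $(\Tr_1,\Vr_1)$, the bridge law on $[0,\Tr_1]$ agrees under $\Probup_u$ and $\Probc_u$; but then you still have to (i) control the random number of bounces on $[\tau_{v_{j+1}},\tau_{v_j}]$ via the random walk $\Proup$, (ii) bound the sup on each inter-bounce interval uniformly over those bounces, and (iii) obtain an estimate summable in $j$—none of which is sketched. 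Also, tightness of your $R_j$ alone does not give almost-sure convergence; you would need an explicit tail bound decaying fast enough for Borel--Cantelli. In short, your route might be completable with substantial additional work, but the paper's contradiction argument via Lemma~\ref{P_x,1} sidesteps all of these difficulties.
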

This lemma allows in particular to extend $\Pupz^*$ to $\Real_+$. We
call $\Pupz$ this extension. The second lemma is more technical and
controls the behavior of the process on $[0,\tau_v[$ under
$\Probup_u$.

\begin{lemma}\label{lemma_majoration_M_u}
 Write $M_v= \sup\{|\dot X_t|, t \in [0,\tau_v[\}$. Then,
\begin{equation}\label{majoration_M_u}
\forall \eps>0, \forall \delta>0, \exists v_0>0, \exists u_0>0,
\forall 0<u\le u_0,\quad \Probup_u( M_{v_0} \ge \delta)\le \eps,
\end{equation}
\end{lemma}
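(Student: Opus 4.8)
The plan is to reduce \eqref{majoration_M_u} to a renewal-type property of the underlying random walk $(S_n)$ carrying an i.i.d.\ family of excursion marks, in the same spirit as the corresponding estimate in \citejacobRLPI. First I would cut the trajectory before $\tau_{v_0}$ into its successive bounce-excursions. Put $N:=\min\{n\ge0:\Vr_n>v_0\}$; for $u<v_0$ one has $N\ge1$, $\tau_{v_0}=\Tr_N$, $[0,\tau_{v_0})=\bigcup_{n=0}^{N-1}[\Tr_n,\Tr_{n+1})$, and $\Vr_n\le v_0$ for every $n\le N-1$. Undoing the Kolmogorov scaling at the start of the $n$-th excursion,
$$\sup_{t\in[\Tr_n,\Tr_{n+1})}|\dot X_t|=\Vr_n R_n,\qquad R_n:=\sup_{0\le s<A_n}\Bigl|\Inv{\Vr_n}\dot X_{\Tr_n+\Vr_n^2 s}\Bigr|,\quad A_n:=\frac{\Tr_{n+1}-\Tr_n}{\Vr_n^2}.$$
A short computation from the $h$-transform formula \eqref{eq_def_Pup} together with scaling (the argument that makes $\bigl(A_n,\Vr_{n+1}/\Vr_n\bigr)_{n\ge0}$ i.i.d.\ ) shows that, under $\Probup_u$, the rescaled excursions, and in particular the marks $(R_n)_{n\ge0}$, form an i.i.d.\ sequence whose law does not depend on $u$. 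Writing $\sigma_n:=\ln v_0-S_n$, so that $\Vr_n=v_0\,e^{-\sigma_n}$ and $\sigma_n\ge0$ for $n\le N-1$, this gives
$$M_{v_0}=\max_{0\le n\le N-1}\Vr_n R_n=v_0\max_{1\le j\le N}e^{-\sigma_{N-j}}R_{N-j}.$$

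Next I would record the tail of a single mark: under $\Probup_1$ one has $R_0=\sup_{0\le t<\Tr_1}|\dot X_t|$, and combining the polynomial tail \eqref{queue_Trr_up} of $\Tr_1$ with Gaussian bounds for $\sup_{s\le t}|B_s|$ gives $\Probup_1(R_0>a)=O(a^{-p})$ as $a\to\infty$ for some $p>0$. Since under $\Probup_1$ the walk $(S_n)$ has positive drift $\muup$, the reversed sequence $(\sigma_{N-j})_{j\ge1}$ grows linearly in $j$; attaching to $(S_n)$ the i.i.d.\ marks $(R_n)$ and running the renewal argument of \citejacobRLPI (convergence of the spatially translated trajectory $\Thetasp_y$ to the spatially stationary random walk, now for the marked walk), the law of $\bigl(\sigma_{N-j},R_{N-j}\bigr)_{j\ge1}$ under $\Probup_u$ converges, as $u\to0$, to a limit law $\Prob_\infty$ that does not depend on $v_0$; under $\Prob_\infty$ a routine Borel--Cantelli argument (using the linear growth of $\sigma_j$ and the polynomial tail of $R_j$) shows that $\Sigma:=\sup_{j\ge1}e^{-\sigma_j}R_j<\infty$ almost surely and that the finite maxima $\max_{1\le j\le N}e^{-\sigma_{N-j}}R_{N-j}$ converge in law to $\Sigma$.

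To conclude, given $\eps>0$ and $\delta>0$ I would pick $K$, a continuity point of the law of $\Sigma$, with $\Prob_\infty(\Sigma>K)<\eps$, then set $v_0:=\delta/(2K)$, and finally choose $u_0<v_0$ small enough that $\Probup_u\bigl(\max_{1\le j\le N}e^{-\sigma_{N-j}}R_{N-j}>K\bigr)<\eps$ for all $0<u\le u_0$. For such $u$,
$$\Probup_u(M_{v_0}\ge\delta)=\Probup_u\Bigl(v_0\max_{1\le j\le N}e^{-\sigma_{N-j}}R_{N-j}\ge\delta\Bigr)\le\Probup_u\Bigl(\max_{1\le j\le N}e^{-\sigma_{N-j}}R_{N-j}>K\Bigr)<\eps,$$
which is \eqref{majoration_M_u}. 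I expect the main obstacle to be this last passage to the limit: the stopping index $N$, hence the reversed walk $(\sigma_{N-j})$, is correlated with the excursion marks $(R_{N-j})$, and the convergence has to be made uniform in $u$; both points are handled as in the analogous estimate of \citejacobRLPI, by reading the whole configuration from the first bounce above a fixed level and reversing time, which is what the spatially stationary random walk is designed for.
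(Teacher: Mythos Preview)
Your approach is workable in principle, but it is considerably more elaborate than the paper's, and it does not in fact follow the proof in \citejacobRLPI: that paper obtained the analogous estimate via the stochastic differential equation satisfied by $\Probc$, which the present paper explicitly says is \emph{not} available for $\Probup$ and is the reason a new argument is required. The paper's proof is a three-line application of Lemma~\ref{P_x,1}. One observes that $\{M_v\ge\delta\}=\{T_\delta\le\tau_v\}$, applies the strong Markov property at $T_\delta$ together with the uniform bound $\Probup_{x,u}(\Vr_1/c\ge|u|/2)\ge1-\sqrt3/\pi$, and concludes that for every $v<c\delta/2$,
\[
(1-\sqrt3/\pi)\,\Probup_u(M_v\ge\delta)\le\Probup_u(\dot X_{\tau_v}\ge c\delta/2).
\]
Then one chooses $v_0$ so that $\Pupz(\dot X_{\tau_{v_0}}\ge c\delta/2)\le\eps$ (possible because the overshoot law $\overup$ has no atom at $+\infty$), and uses the already-established convergence of the law of $\dot X_{\tau_{v_0}}$ under $\Probup_u$ to that under $\Pupz$ to take $u_0$ small enough. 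No decomposition into excursion marks, no tail estimate on $R_0$, no spatially stationary marked walk is needed.

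Your route via the marked random walk is a genuine alternative and buys a more explicit description of $M_{v_0}$, but two points deserve more care than you indicate. First, under $\Probup_1$ the velocity on $[0,\Tr_1)$ is \emph{not} a Brownian motion (the $h$-transform tilts the path law), so ``Gaussian bounds for $\sup_{s\le t}|B_s|$'' do not apply directly; you would have to transfer the tail from $\Probc_1$ to $\Probup_1$, e.g.\ via $\Probup_1(A)=\Probc_1(\un_A\Vr_1^{2k})$ and H\"older's inequality against a finite moment of $\Vr_1$. Second, the functional $\sup_{j\ge1}e^{-\sigma_j}R_j$ is not continuous in the product topology on sequences, so passing from weak convergence of the reversed marked walk to weak convergence of the maximum requires a truncation with uniform control of the tail contribution; your Borel--Cantelli remark gestures at this, but it is extra work, and it cannot simply be cited from \citejacobRLPI. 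The paper's trick via Lemma~\ref{P_x,1} bypasses both issues entirely.
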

In \citejacobRLPI, we proved these two results by using the
stochastic partial differential equation satisfied by the laws
$\Probc$. They are of course not available for the laws $\Probup$,
and we need a new proof. We start by showing a rather simple but
really useful inequality:
\begin{lemma} \label{P_x,1}
 The following inequality holds for any $(x,u) \in D$,
\begin{equation} \label{controle_Vr}
    \Probup_{x,u} \left(\Vr_1 / c \ge \frac {|u|} 2\right) \ge 1-\frac {\sqrt 3} \pi.
\end{equation}
\end{lemma}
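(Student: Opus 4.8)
The plan is to reduce the statement to a one-step estimate about the underlying random walk $(S_n)$, using only the first hitting time $\Tr_1$ of zero and the explicit joint density \eqref{loijointe_VrTrII}. The inequality $\Vr_1/c \ge |u|/2$ is scale-covariant in $u$: under $\Probup_{0,u}$ the pair $(\Tr_1/u^2, \Vr_1/(cu))$ has the same law as $(\Tr_1, \Vr_1/c)$ under $\Probup_{0,1}$, so for the case $x=0$ the claim is exactly $\Probup_1(\Vr_1/c \ge 1/2) \ge 1 - \sqrt3/\pi$. First I would establish this for $\Probup_1$, and then pass to general $(x,u)\in D$ by the Markov property at time $\Tr_1$: under $\Probup_{x,u}$, conditionally on $\dot X_{\Tr_1}=\Vr_1$, the future has law $\Probup_{\Vr_1}$, and $|\dot X_{\Tr_1}|$ plays the role of the ``$u$'' for the next excursion away from $0$; combined with the scaling this gives the bound for every starting point, since the constant on the right-hand side does not depend on $(x,u)$.

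For the case $\Probup_1$, recall from the proof of Lemma~\ref{lemme_queuebis} that the density of $(\Tr_1,\Vr_1/c)$ under $\Probup_1$ is $(cv)^{2k}$ times the density under $\Prob_1^c$ given by \eqref{loijointe_VrTrII}. I would integrate out $s$ first. Looking back at the second-marginal computation in the proof of Lemma~\ref{lemme_queueTri}, integrating \eqref{loijointe_VrTrII} in $s$ gives the marginal \eqref{loi_VrI}, namely $\Prob_1^c(\Vr_1/c \in \de v) = \frac{3}{2\pi}\frac{v^{3/2}}{1+v^3}\de v$; inserting the Radon--Nikodym factor $v^{2k}$ (the $c^{2k}$ cancels against the normalization since $\Prob_1^c(\Vr_1^{2k})=1$, i.e.\ $\Prob_1^c((\Vr_1/c)^{2k})=c^{-2k}$) yields
\begin{equation*}
\Probup_1(\Vr_1/c \in \de v) = \frac{3}{2\pi}\,c^{2k}\,\frac{v^{3/2+2k}}{1+v^3}\,\de v.
\end{equation*}
So the quantity to bound from below is $\frac{3c^{2k}}{2\pi}\int_{1/2}^\infty \frac{v^{3/2+2k}}{1+v^3}\,\de v$, and equivalently I want to bound the complementary integral $\frac{3c^{2k}}{2\pi}\int_0^{1/2}\frac{v^{3/2+2k}}{1+v^3}\,\de v$ from above by $\sqrt3/\pi$.

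The key observation is that the worst case is $k=0$ (equivalently $c=\ccr$), because increasing $k$ puts more mass at large $v$. Concretely, on $(0,1/2)$ one has $v^{3/2+2k}\le v^{3/2}$, and $c^{2k}\le 1$, so
\begin{equation*}
\Probup_1\!\left(\Vr_1/c < \tfrac12\right)
\le \frac{3}{2\pi}\int_0^{1/2}\frac{v^{3/2}}{1+v^3}\,\de v
\le \frac{3}{2\pi}\int_0^{1/2} v^{3/2}\,\de v
= \frac{3}{2\pi}\cdot\frac{2}{5}\left(\tfrac12\right)^{5/2}
= \frac{3}{5\pi\cdot 4\sqrt2},
\end{equation*}
which is far below $\sqrt3/\pi \approx 0.551$. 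Thus the inequality \eqref{controle_Vr} holds with a large margin. The only mild subtlety — and the ``main obstacle,'' such as it is — is bookkeeping the normalization: one must check that $\int_0^\infty \frac{3c^{2k}}{2\pi}\frac{v^{3/2+2k}}{1+v^3}\de v = 1$, which is precisely the defining relation $\Prob_1^c(\Vr_1^{2k})=1$ together with \eqref{moments_Vr}, so no new computation is needed; and that the Markov-property reduction for general $(x,u)$ is legitimate, which follows from \eqref{eq_def_Pup} and the strong Markov property at $\Tr_1$ exactly as in the proof of Lemma~\ref{lemme_queuebis}. I expect the whole argument to be short; the constant $1-\sqrt3/\pi$ is evidently not sharp and is chosen for convenience in the later lemmas.
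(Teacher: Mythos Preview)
Your computation for the case $x=0$ is correct and actually yields a much sharper bound than the paper needs. The gap is in the reduction from general $(x,u)\in D$ to $x=0$: the Markov property at time $\Tr_1$ does not do what you claim. Under $\Probup_{x,u}$ with $x>0$, the variable $\Vr_1=\dot X_{\Tr_1}$ is $\F_{\Tr_1}$-measurable, so conditioning on $\Vr_1$ and invoking the post-$\Tr_1$ law $\Probup_{\Vr_1}$ says nothing about the relation between $\Vr_1$ and the initial velocity $u$; it only controls $\Vr_2,\Vr_3,\ldots$ in terms of $\Vr_1$. What your argument would actually deliver is $\Probup_{x,u}(\Vr_2/c\ge \Vr_1/2)\ge 1-\sqrt3/\pi$, which is not the statement. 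The analogy with Lemma~\ref{lemme_queuebis} is misleading: there the target variable $\Tri$ genuinely depends on the path after $\Tr_1$, so the strong Markov property bites; here $\Vr_1$ does not.

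The paper's route supplies exactly the missing ingredient: uniform-in-$x$ control on the law of $\Vr_1/c$ for the free Kolmogorov process started at $(x,u)$ with $x>0$ and $|u|=1$. It quotes Gor'kov's explicit density $f_{x,u}$ for $\Vr_1/c$ under $\Prob_{x,u}^c$, bounds it crudely by
\[
f_{x,u}(v)\ \le\ v\,\Phi(x,u;0,-v)\ \le\ v\int_0^\infty \frac{\sqrt3}{\pi t^2}\,e^{-(u+v)^2/(2t)}\,\de t\ \le\ \frac{8\sqrt3}{\pi}\,v
\qquad (v\in[0,1/2],\ |u|=1),
\]
and integrates to get $\Prob_{x,u}^c(\Vr_1/c<1/2)\le \sqrt3/\pi$ uniformly in $x$. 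Passing to $\Probup_{x,u}$ can only help, since the Radon--Nikodym factor $v\mapsto v^{2k}/H(x,u)$ is increasing in $v$ and hence shifts mass to the right. Your $x=0$ argument cannot replace this step, because the density \eqref{loi_VrI} is specific to $x=0$; for $x>0$ one needs some input about the hitting distribution of the free Kolmogorov process, and Gor'kov's formula (or an equivalent estimate) is where it comes from.
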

For us, the important fact is that the probability is bounded below
by a positive constant, uniformly in $x$ and $u$. The constant
$1-\sqrt 3 / \pi$ is not intended to be the optimal one. Note that
this inequality will also be used again later on in this paper.

\begin{proof}[Proof of Lemma~\ref{P_x,1}]
For $u=0$, there is nothing to prove. By a scaling invariance
property we may suppose $u\in \{-1, 1\}$, what we do.

The density $f_{x,u}$ of $\Vr_1 /c$ under $\Prob_{x,u}^c$ is given
in Gor'kov \cite{Gorkov}. If you write $p_t(x,u;y,v)$ for the
transition densities of the (free) Kolmogorov process, given by
\begin{equation} \nonumber
p_t(x,u;y,v) = \frac{\sqrt 3}{\pi t^2} \exp \Big[ -\frac{6}{t^3}
(y-x-tu)^2 + \frac{6}{t^2} (y-x-tu)(v-u)-\frac{2}{t} (v-u)^2 \Big],
\end{equation}
and $\Phi(x,u;y,v)$ for its total occupation time densities, defined
by
$$ \Phi(x,u;y,v) := \int_0^\infty p_t(x,u;y,v) dt,$$
then the density $f_{x,u}$ is given by
\begin{eqnarray} 
f_{x,u}(v) &=& v \Big[ \Phi (x,u;0,-v) - \frac{3}{2\pi}
\int_0^\infty \frac{\mu^\frac{3}{2}}{\mu^3+1} \Phi(x,u;0,\mu v )
d\mu \Big]. \ \ \ \ \ \ \ \
\end{eqnarray}
Now, knowing the density of $\Vr_1$ under $\Prob_{x,u}^c$, we get that of $\Vr_1$ under $\Probup_{x,u}$ by multiplying it by the increasing function $v \mapsto v^{2k}$. This necessarily increases the probability of being greater than $c/2$. Consequently, it is enough to prove
$$\Prob_{x,u}^c (\Vr_1 / c \ge \frac 1 2) \ge K' $$
as soon as $u\in\{-1,1\}$. But very rough bounds give
\begin{eqnarray*}
f_{x,u}(v) &\le& v \Phi (x,u;0,-v) \\
&\le& v \int_0^\infty \frac{\sqrt 3}{\pi t^2} \exp(- \frac {(u+v)^2}
{2t}) \de t.
\end{eqnarray*}
For $u\in\{-1,1\}$ and $v \in [0,1/2]$ we have $|u+v|\ge 1/2$ and
thus
$$f_{x,u}(v) \le \frac{v \sqrt 3}{\pi} \int_0^\infty \Inv{t^2} \exp(-\Inv {8t}) \de t = \frac {8 \sqrt 3} {\pi} v. $$
Consequently,
$$\Prob_{x,u}(\Vr_1/c \ge \Inv 2) \ge 1- \int_0^{1/2} \frac {8 \sqrt 3} {\pi} v \de v = 1- \frac {\sqrt 3} \pi >0.$$
\end{proof}
\begin{proof}[Proof of Lemma~\ref{convergence_X,X_0,0}]
First, observe that conditions $(*)$ and $(**)$ imply that the
variables $\tau_v=\inf\{t>0, X_t=0, \dot X_t > v\}$ and $\tau_v^-:=
\sup\{t<\tau_v, X_t=0\}$ are almost surely strictly positive and go
to zero when $v$ goes to zero. Then, observe that is is enough to
show the almost sure convergence of $\dot X_t$ to 0 when $t \to 0$,
 and suppose on the
contrary that this does not hold.

 Then, there would exist a positive $x$ such that
$\Pupz^*(T_x=0)>0$, where we have written $T_x:=\inf\{t>0, |\dot
X_t|>x\}.$ By self-similarity this would be true for any $x>0$ and
in particular we would have
\begin{equation} \label{T_1=0_probaK}
K:= \Pupz^*(T_1=0)>0.
\end{equation}
Informally, this, together with \eqref{controle_Vr}, should induce
that $\tau_{c/2}^-$ takes the value zero with probability at least
$(1 - \sqrt 3 / \pi) K$, and give the desired contradiction. However
it is not straightforward, because we cannot use a Markov property
at time $T_1$, which can take value 0, while the process is still
not defined at time 0. Consider the stopping time $T_1^\eps:=\inf
\{t>\eps, |\dot X_t|>x\}$. For any $\eta>0$, we have
$$\liminf_{\eps \to 0} \Pupz^*(T_1^\eps < \eta) \ge \Pupz^*(\liminf_{\eps \to 0} \{T_1^\eps < \eta\} ) \ge \Pupz^*(T_1<\eta) \ge K,$$
and in particular there is some $\eps_0(\eta)$ such that for any
$\eps<\eps_0(\eta)$,
\begin{equation} \label{T_1^eps}
 \Pupz^*(T_1^\eps < \eta) \ge \frac K 2.
\end{equation}
Now, write $\theta$ for the translation operator defined by
$\theta_x((X_t)_{t\ge0}) = (X_{x+t})_{t\ge0}$, so that $\Vr_1 \circ
\theta_{T_1^\eps}$ denotes the velocity of the process at its first
bounce after time $T_1^\eps$. From \eqref{T_1^eps} and
Lemma~\ref{P_x,1}, a Markov property gives, for $\eps <
\eps_0(\eta)$,
$$\Pupz^* \left( T_1^\eps < \eta, \Vr_1 \circ \theta_{T_1^\eps} \ge \frac c 2 \right) \ge K':= \Bigg(1-\frac {\sqrt 3} {\pi}\Bigg) \frac K 2.$$
We have \emph{a fortiori} $\Pupz^*(\tau_{c/2}^- \le \eta) \ge K'.$
This result true for any $\eta>0$ leads to $\Pupz^*(\tau_{c/2}^- =
0) \ge  K' >0$, and we get a contradiction. This shows $(X_t,\dot
X_t) \underset{t\to 0} \To (0,0)$ under $\Pupz^*$, as requested.
\end{proof}
\begin{proof}[Proof of Lemma~\ref{lemma_majoration_M_u}]
We should prove~\eqref{majoration_M_u}. Fix $\eps, \delta>0$. The
event $\{M_v\ge \delta\}$ coincides with the event $T_\delta \le
\tau_v$. From a Markov property at time $T_\delta$
and~\eqref{controle_Vr}, we get, for any $v<c \delta / 2$, and any
$u$,
$$(1-\sqrt 3/\pi)  \Probup_u(M_v \ge \delta) \le \Probup_u(\dot X_{\tau_v} \ge c \delta / 2).$$
Choose $v_0$ such that $\Pupz(\dot X_{\tau_{v_0}} \ge c \delta / 2)
\le \eps$. Then, from the convergence of the law of $\dot
X_{\tau_{v_0}}$ under $\Probup_u$ to that under $\Pupz$, we get, for
$u$ small enough,
$$\Probup_u(\dot X_{\tau_{v_0}} \ge c \delta / 2) \le 2 \eps,$$
and hence
$$ \Probup_v(M_{v_0} \ge \delta) \le \frac {2 } {1 - \sqrt 3 / \pi}\ \eps.$$
\end{proof}
In conclusion, all this suffices to show
Proposition~\ref{proposition_theoRLP}.

\section{The resurrected process} \label{section_Resurrect}

\subsection{It\=o excursion measure, recurrent extensions,\\
 and $(SOR)$ equations}

We finally tackle the problem of interest, that is the recurrent
extensions of the reflected Kolmogorov process. A recurrent
extension of the latter is a Markov process that behaves like the
reflected Kolmogorov process until $\Tri$, the hitting
time 
 of $(0,0)$, but that is defined for any positive times and
does not stay at $(0,0)$, in the sense that the Lebesgue measure of
the set of times when the process is at $(0,0)$ is almost surely 0.
More concisely, we will call such a process a resurrected reflected
process.

We recall that It\=o's program and results of Blumenthal
\cite{Blumenthal83} establish an equivalence between the law of
recurrent extensions  of a Markov process and excursion measures
compatible with its semigroup, here $\Psg_t^c$ (where as usually in
It\=o's excursion theory we identify the measures which are equal up
to a multiplicative constant). The \emph{set of excursions} $\E$ is
defined by
$$ \E := \{(x,\dot x) \in \Cbb| \Tri>0 \text{ and }x_t \un_{t\ge \Tri} = 0 \}.$$
An excursion measure $n$ compatible with the semigroup $\Psg_t^c$ is
defined by the three following properties:
\begin{enumerate}
\item The measure $n$ is carried by $\E$.
\item For any $\F_\infty-$measurable function $F$ and any $t>0$, any $A \in \F_t$,
 $$ n(F\circ \theta_t,A \cap\{t<\Tri\}) = n( \Prob_{X_t,\dot X_t}^c(F),A \cap \{t<\Tri\}).$$
\item $ n(1-e^{-\Tri})<\infty.$
\end{enumerate}
We also say that $n$ is a pseudo-excursion measure compatible with
the semigroup $\Psg_t^c$ if only the two first properties are
satisfied and not necessarily the third one. We recall that the
third property is the necessary condition in It\=o's program in
order for the lengths of the excursions to be summable, hence in
order for It\=o's program to succeed.
 Besides, we are here interested in recurrent extensions which leave $(0,0)$ continuously. 
These extensions correspond to excursion measures $n$ which satisfy
the additional condition $n((X_0,\dot X_0) \ne (0,0))=0$.
 Our main results are the following:

\begin{theorem} \label{Theo_mesure}
There exists, up to a multiplicative constant, a unique excursion
measure $\n$ compatible with the semigroup $\Psg_t^c$ and such that
$\n((X_0,\dot X_0) \ne (0,0) )=0$. We may choose $\n$ such that
\begin{equation} \label{queue_Tri_sous_n}
\n(\Tri>s) = C_1 s^{-k},
\end{equation}
where $C_1$ is the constant defined by~\eqref{valeur_C_1}, and
$k=k(c)$ has been introduced in Lemma~\ref{lemme_queueTri}. The
measure $\n$ is then characterized by any of the two following
formulas:
\begin{eqnarray} \label{formula1}
\n(f(X,\dot X), \Tri>T) &=& \Pupz(f(X,\dot X) H(X_T,\dot X_T) ^{-1}
),
\end{eqnarray}
for any $\F_t-$stopping time $T$ and any $f$ positive measurable
functional depending only on $(X_t, \dot X_t)_{0\le t \le T}$.
\begin{eqnarray}
\label{formula2} \n (f(X,\dot X), \Tri>T) &=& \lim_{(x,u) \to (0,0)}
H(x,u)^{-1} \Prob_{x,u}^c(f(X,\dot X), \Tri>T),
\end{eqnarray}
for any $\F_t-$stopping time $T$ and any $f$ positive
\emph{continuous} functional depending only on $(X_t, \dot
X_t)_{0\le t \le T}$.
\end{theorem}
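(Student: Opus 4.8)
The strategy is to use the candidate formula \eqref{formula1} \emph{as the definition} of $\n$, check it is a well-defined $\sigma$-finite measure on $\E$, verify the three defining properties of an excursion measure, derive the tail \eqref{queue_Tri_sous_n}, and only then establish uniqueness and the equivalence with \eqref{formula2}. First I would fix a reference stopping time --- say $T=\tau_v$, the first bounce with speed $>v$ --- and define, for each $v>0$, a measure $\n_v$ on $\F_{\tau_v}$ by $\n_v(f)=\Pupz\bigl(f(X,\dot X)\,H(X_{\tau_v},\dot X_{\tau_v})^{-1}\bigr)$ for $f$ supported on $\{\tau_v<\Tri\}$ (which is $\Pupz$-a.s.\ the whole space, since $\Tri=\infty$ under $\Pupz$). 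The key consistency point is that these are compatible as $v\downarrow 0$: if $v'<v$ and $f$ is $\F_{\tau_{v'}}$-measurable, then using the Markov property of $\Pupz$ at $\tau_{v'}$ (property $(*)$ of Proposition~\ref{proposition_theoRLP}), the harmonicity of $H$ for $\widetilde\Psg_t$, and the relation \eqref{eq_def_Pup} between $\Probup$ and $\Prob^c$, one gets $\n_v(f)=\n_{v'}(f)$. Here the two facts doing the work are $H(0,u)=u^{2k}$ together with the explicit law of the overshoot being $\overshoot$, and the $h$-transform identity $\Probup_{x,u}(A)=H(x,u)^{-1}\Prob^c_{x,u}(A\,H(X_T,\dot X_T),T<\Tri)$, which is exactly \eqref{formula1} read backwards. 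This produces a single projective-limit measure $\n$ on $\Cbb$ with $\n\restriction_{\F_{\tau_v}}=\n_v$, carried by $\E$ by construction, so property (1) holds.

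For property (2), the Markov property under $\n$, I would take $t>0$, $A\in\F_t$, $F$ bounded $\F_\infty$-measurable, and approximate $t$ from above by a bounce time $\tau_v$ (or argue on $\{t<\tau_v\}$ and let $v\to0$); on that event the $\n$-mass is given by the $\Pupz$ formula, and the Markov property of $\Pupz$ together with the $h$-transform identity converts $\Pupz(\cdots H(X_t,\dot X_t)^{-1})$ into $\n(\Prob^c_{X_t,\dot X_t}(F),A\cap\{t<\Tri\})$; the $H$ factors telescope precisely because $H$ is $\Psg^c_t$-harmonic. For property (3), $\n(1-e^{-\Tri})<\infty$, it suffices to control $\n(\Tri>s)$ for large $s$. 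Taking $T=\tau_v$ in \eqref{formula1} with $f\equiv1$ and letting $v\to0$ gives $\n(\Tri>s)=\Pupz\bigl(H(X_{0^+},\dot X_{0^+})^{-1}\un_{\Tri>s}\bigr)$; but under $\Pupz$ we have $\Tri=\infty$, and a direct computation --- splitting at $\tau_1$ and using the rescaling relation $\Probc_{0,u}(\Tri>s)=\Probc_{0,1}(\Tri>su^{-2})$ together with the stationary overshoot law $\overshoot$ and Lemma~\ref{lemme_queuebis}, exactly as in Rivero's Imhof-type argument --- yields $\n(\Tri>s)=C_1 s^{-k}$ after the correct choice of the (so far free) multiplicative constant. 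Near $s=0$ one checks $\n(\Tri>s)$ stays integrable against $e^{-s}$, e.g.\ from $\n(\Tri>s)\le\n_v(\text{whole space})<\infty$ for fixed small $v$ once we know $\tau_v>0$ $\n$-a.e.; finiteness of that total mass is itself the statement that $H^{-1}$ is $\Pupz$-integrable near time $0$, which follows from Lemma~\ref{lemma_majoration_M_u} controlling $\dot X$ on $[0,\tau_v)$ and from $H(x,u)\ge$ const $>0$ away from $(0,0)$ combined with the overshoot having finite $\overshoot$-mass. I expect this integrability-near-zero estimate to be the main technical obstacle: one must rule out $\n$ putting infinite mass on short excursions, and this is where the hard analytic input (Lemmas~\ref{lemme_queuebis}, \ref{P_x,1}, \ref{lemma_majoration_M_u}) is genuinely needed rather than soft.

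Finally, for uniqueness: let $n'$ be any excursion measure compatible with $\Psg^c_t$ with $n'((X_0,\dot X_0)\ne(0,0))=0$. Property (2) forces, for $f$ depending on $(X_s,\dot X_s)_{s\le T}$ and $T=\tau_v$, the conditional law given $(X_{\tau_v},\dot X_{\tau_v})=(0,u)$ to be $\Probc_u$; the only freedom is the ``entrance law'', i.e.\ the $n'$-law of the first bounce $(\tau_v,\dot X_{\tau_v})$. A Markov-at-$\tau_{v'}$ argument combined with the self-similarity (scaling) of $\Prob^c$ shows the $n'$-distribution of $\ln(\dot X_{\tau_v}/v)$ is the same for all $v$, hence is a fixed law $\rho$ independent of $v$; compatibility between levels $v'<v$ forces $\rho$ to be invariant under the renewal shift associated with the ladder structure of $(S_n)$, which by the renewal theorem pins $\rho=\overshoot$ up to total mass. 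Equivalently one recognizes $n'$ restricted past $\tau_v$ as $\Probup$ with an entrance law, and the only entrance law making the pieces consistent is the stationary overshoot --- this is exactly the content of Proposition~\ref{proposition_theoRLP}. Normalizing so that \eqref{queue_Tri_sous_n} holds removes the multiplicative constant, giving $n'=\n$. The equivalence \eqref{formula1}$\Leftrightarrow$\eqref{formula2} is then almost immediate: \eqref{formula2} is the limit form of the $h$-transform definition, and applying \eqref{neverhitting}/Lemma~\ref{lemme_queuebis} with the normalization $s^k\Probc_{x,u}(\Tri>s)\to H(x,u)C_1$ identifies $\lim_{(x,u)\to(0,0)} H(x,u)^{-1}\Prob^c_{x,u}(f,\Tri>T)$ with $\Pupz(f\,H(X_T,\dot X_T)^{-1})$ for continuous $f$, by the weak convergence $\Probup_{x,u}\Rightarrow\Pupz$ of Proposition~\ref{proposition_theoRLP} --- the continuity hypothesis on $f$ being exactly what is needed to pass to the weak limit. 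I would end by remarking that the deduction ``resurrected process solves $(SOR)$ weakly from $(0,0)$'' is then a separate matter, handled afterwards, but the uniqueness of $\n$ is what makes it a \emph{uniqueness} statement.
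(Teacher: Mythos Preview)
Your construction is in the same spirit as the paper's but needlessly elaborate. The paper simply observes that $1/H$ is excessive for the semigroup $\widetilde\Psg_t$ and \emph{defines} $\n$ as the $h$-transform of $\Pupz$ by $1/H$; this gives \eqref{formula1} at once, with no projective-limit construction over the $\tau_v$'s. Formula~\eqref{formula2} then follows from the weak convergence $\Probup_{x,u}\Rightarrow\Pupz$ and the $h$-transform identity, and the tail \eqref{queue_Tri_sous_n} is obtained by taking $T=s$, $f\equiv1$ in \eqref{formula2} along $x=0$ and invoking Lemma~\ref{lemme_queuebis} with scaling. Your anticipated ``main technical obstacle'' --- integrability of $1-e^{-\Tri}$ near zero --- evaporates: once you have $\n(\Tri>s)=C_1 s^{-k}$ with $k<1/4<1$, the identity $\n(1-e^{-\Tri})=C_1\Gamma(1-k)$ is immediate. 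Lemmas~\ref{P_x,1} and~\ref{lemma_majoration_M_u} are not used here; they were inputs to Proposition~\ref{proposition_theoRLP}, which you may now take for granted.

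Your uniqueness argument, however, has a genuine gap. You analyse $n'$ through the ``first bounce'' data $(\tau_v,\dot X_{\tau_v})$, but you never rule out that under $n'$ the excursion behaves, for a positive initial stretch, like a free Langevin arc with $\Tr_1>0$ --- in which case there is no bounce before $\Tr_1$, $\tau_v$ may well exceed $\Tr_1$, and your overshoot renewal picture does not even start. The paper isolates this as a separate lemma: one supposes $n'(\Tr_1\neq0)>0$, restricts to that event, stops at $\Tr_1$, recognises the resulting measure as (a multiple of) the It\=o excursion measure of the \emph{totally inelastic} reflected Langevin process, reads off from \cite{reflecting} that the incoming speed at $\Tr_1$ has density $\propto v^{-3/2}$, and then shows via the tail of $\Tri$ under $\Probc_v$ that $n'(\Tri-\Tr_1>t)=\infty$, violating the excursion-measure condition $n'(1-e^{-\Tri})<\infty$. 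This step is not optional and has no obvious substitute. Once $\Tr_1=0$ $n'$-a.e.\ is established, the paper's route diverges from yours: it does \emph{not} pin down the overshoot law by a renewal argument, but simply uses $\tau_v\downarrow0$ $n'$-a.s., the Markov property at $\tau_v$, and the already-proved \eqref{formula2} for $\n$ to show $\n'_s(f)=C\,\n_s(f)$ for a constant $C$ independent of $s$ and $f$. Your proposed scaling argument (``the $n'$-law of $\ln(\dot X_{\tau_v}/v)$ is independent of $v$'') implicitly uses self-similarity of $n'$, which is what you are trying to prove; absent uniqueness you only know that the rescaled $n'$ is \emph{some} excursion measure compatible with $\Psg_t^c$, not that it is proportional to $n'$.
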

So It\=o's program constructs a Markov process with associated It\=o excursion measure $\n$ and that spends no time at $(0,0)$, 
 that is a recurrent extension, that is a resurrected reflected process. We call its law $\Pcz$.
The second theorem will be the weak existence and solution to
equations $(SOR)$, the law of any solution being given by $\Pcz$. It
is implicit in this theorem and until the end of the paper that the
initial condition is $(0,0)$, though this generalizes easily to any
other initial condition $(x,u) \in D$.

\begin{theorem} \label{theo_SORL}
The law $\Pcz$ gives the unique solution, in the weak sense, of
equations $(SOR)$:

 $\bullet$ Consider $(X, \dot X)$ a process of law $\Pcz$. Then the jumps of $\dot X$ on any finite interval are summable and the process $W$ defined by
$$W_t=\dot X_t + (1+c) \sum_{0<s\le t} \dot X_{s-} \un_{X_s=0}$$
is a Brownian motion. As a consequence the triplet $(X,\dot X,W)$ is
a solution to $(SOR)$.

 $\bullet$ For any solution $(X,\dot X, W)$ to $(SOR)$, the law of $(X, \dot X)$ is $\Pcz$.
\end{theorem}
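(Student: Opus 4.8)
\noindent\emph{Proof proposal.} The statement splits into an \emph{existence} part (the law $\Pcz$ solves $(SOR)$) and a \emph{weak uniqueness} part, handled in turn.

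\textbf{The law $\Pcz$ is a solution.} Let $(X,\dot X)$ have law $\Pcz$. Since $\Pcz$ is produced by It\=o's program out of $\n$, the trajectory decomposes into excursions away from $(0,0)$ forming a Poisson point process of intensity $\de\ell\otimes\n$ (with $\ell$ a local time at $(0,0)$), and the complement $Z=\{t:(X_t,\dot X_t)=(0,0)\}$ of the union of the excursion intervals is closed and Lebesgue-null. The jumps of $\dot X$ sit at the bouncing times $\Tr_n$, the one at $\Tr_n$ having modulus $(1+c)|\dot X_{\Tr_n-}|=(1+c)\Vr_n/c$. First I would prove summability of these jumps on $[0,t]$: within one excursion the process is a killed reflected Kolmogorov process, so $S_n=\ln\Vr_n$ has negative drift, $\sum_n\Vr_n<\infty$ a.s.\ within the excursion, and in fact $\sum_n\Vr_n\le C\sup_{[0,\Tri]}|\dot X|$ with $C$ a light-tailed random constant and $\sup_{[0,\Tri]}|\dot X|$ of order $\sqrt{\Tri}$ by the Langevin scaling; summing over the excursions met in $[0,t]$ (plus the one straddling $t$), the compensation formula together with the bounds $\n(\sqrt{\Tri}\,\un_{\Tri\le t})<\infty$ (from \eqref{queue_Tri_sous_n} and $k<1/4$) and $\Pcz(\ell_t)<\infty$ (the inverse local time being a stable subordinator of index $k$) gives $\Pcz\!\big(\sum_{0<s\le t,\,X_s=0}|\dot X_{s-}|\big)<\infty$, hence a.s.\ summability --- only in the a.s., not the $L^1$, sense, since $\Vr_1$ has infinite mean under $\Probc_1$. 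Granting this, $W_t=\dot X_t+(1+c)\sum_{0<s\le t}\dot X_{s-}\un_{X_s=0}$ is well defined and continuous (its two summands have opposite jumps), $W_0=0$.

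\textbf{$W$ is a Brownian motion.} I would use L\'evy's characterisation: show $W$ is a continuous $(\F_t)$--local martingale with $\langle W\rangle_t=t$. On any excursion interval $(g,d)$, after the first bounce with outgoing speed exceeding $v$ --- which occurs at a time tending to $g$ as $v\to0$, bounces accumulating at the left end of the excursion --- the process is a copy of $\Probc_{0,u}$, $u$ being that speed, which is the \emph{strong} solution of $(SOR)$ up to its hitting time of $(0,0)$, with driving Brownian motion the corresponding increment of $W$; letting $v\to0$ shows that $W$ restricted to $(g,d)$ is a Brownian motion and that $\langle W\rangle$ increases there by $d-g$. As $W$ is constant on the null set $Z$ by continuity, $\langle W\rangle_t=t$; and the local martingale property with respect to $(\F_t)$ follows from the Poissonian independence of the excursions by optional stopping across the excursion endpoints, as in the classical identification of the driving noise of an excursion-theoretic recurrent extension (cf.\ \cite{rivero04}). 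The two $(SOR)$ identities then hold pathwise: the first for every trajectory of $\Ccal$ by definition, the second by rearranging the definition of $W$ (with $\dot X_0=0$); so $(X,\dot X,W)$ solves $(SOR)$.

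\textbf{Weak uniqueness.} Let $(X,\dot X,W)$ be any weak solution with $(X_0,\dot X_0)=(0,0)$. First, the solution cannot occupy $(0,0)$ on a set of positive Lebesgue measure, nor be absorbed: on an interval $[a,b]$ with $(X,\dot X)\equiv(0,0)$ there is no bounce, so $\dot X_t=\dot X_a+(W_t-W_a)=W_t-W_a$ on $[a,b]$, impossible since $W$ is a genuine Brownian motion. Hence (using also that a killed reflected Kolmogorov process started from any point of $D$ hits $(0,0)$ in a.s.\ finite time, by subcriticality) $Z$ is a.s.\ a Lebesgue-null closed set accumulating to the right of each of its points, and $(X,\dot X)$ is a resurrected reflected process. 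Next, inside any excursion interval $(g,d)$, at any internal stopping time $T<d-g$ with $(X_{g+T},\dot X_{g+T})\in D$ the future $(X_{g+T+\cdot},\dot X_{g+T+\cdot})$ solves $(SOR)$ from $(X_{g+T},\dot X_{g+T})$ with driving noise $(W_{g+T+\cdot}-W_{g+T})$, independent of the past by the strong Markov property of $W$; by pathwise uniqueness of $(SOR)$ on $D$ this future has law $\Probc_{X_{g+T},\dot X_{g+T}}$. So the excursions of the solution are compatible with $\Psg_t^c$, start from $(0,0)$, and have a.s.\ summable lengths; It\=o's program and the results of Blumenthal \cite{Blumenthal83} then identify their point process as a Poisson point process whose intensity $\nbis$ is a genuine excursion measure compatible with $\Psg_t^c$ and with $\nbis((X_0,\dot X_0)\ne(0,0))=0$. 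By the uniqueness part of Theorem~\ref{Theo_mesure}, $\nbis$ is a multiple of $\n$, the multiple only rescaling the local time, so the law of $(X,\dot X)$ is $\Pcz$. --- The crux here is that a weak solution is not assumed Markov, so showing that its excursions form a Poisson point process with a \emph{deterministic} intensity is the real work: one exploits that pathwise uniqueness of $(SOR)$ \emph{holds} on $D$ (failing only at $(0,0)$), so each excursion, once it has reached $D$, is a measurable functional of the corresponding slice of $W$, and, combined with the independence of increments of $W$, this yields the regeneration property at successive returns to $(0,0)$ needed to run It\=o's synthesis, as in \cite{rivero04}. The additional subtlety, flagged already, is that starting from $(0,0)$ bounces accumulate at \emph{both} ends of each excursion --- exactly the reason the conditioned process of Section~\ref{section_Pupz} had to be described from the random time $\tau_v$ --- so the excursion-to-noise correspondence must be set up from $\tau_v$ and then passed to the limit $v\to0$, the summability estimate above being what makes that limit well defined at the accumulation near the left endpoint.
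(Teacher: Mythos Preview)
Your existence argument is in the right spirit, though the summability step is looser than the paper's. The paper does not bound $\sum_n V_n$ by $C\sup|\dot X|$; instead it counts, under $\n$, the number $N_{[v,1]}$ of bounces with outgoing speed in $[v,1]$, bounds $\n(N_{[v,1]})\le v^{-2k}\Pupz(N_{[v,1]})=O(v^{-2k}\ln(1/v))$ via formula~\eqref{formula1} and the spatially stationary random walk, and integrates in $v$ (finite since $k<1/4$). Your bound $\sum_n V_n\le C\sup|\dot X|$ with $C$ ``light-tailed'' is not justified and is not obviously true. For the Brownian identification the paper also uses an explicit $\eps$-decomposition via stopping times $T_n^\eps$ and shows the residual term $D_t^\eps\to 0$ using Lemma~\ref{P_x,1}; your L\'evy-characterisation sketch is similar in spirit but skips this control.

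The uniqueness argument, however, has genuine gaps. First, your ``no time at $(0,0)$'' step only rules out \emph{intervals} where $(X,\dot X)\equiv(0,0)$; a positive-measure set need not contain one. The paper instead observes that the jump sum is monotone (all jumps are negative multiples of $\dot X_{s-}\le 0$) and $\dot X$ does not explode, hence the jumps are summable, $\dot X$ is a semimartingale, and the occupation-time formula gives $\int_0^t\un_{\dot X_{s-}=0}\,\de s=0$. Second, and more seriously, you invoke It\=o--Blumenthal to conclude that the excursions of an arbitrary weak solution form a Poisson point process with deterministic intensity, then apply Theorem~\ref{Theo_mesure}. But a weak solution is not assumed Markov, and ``pathwise uniqueness on $D$ plus independent increments of $W$'' does not by itself produce the regeneration property at the (non-stopping-time) excursion endpoints; this is precisely the step you flag as ``the real work'' but do not carry out. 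The paper avoids this entirely: it never shows the solution is Markov. Instead it (i) proves, via two time-changes reducing to the $c=0$ problem of \cite{SDE}, that every excursion bounces immediately after its left endpoint (no excursions of type $\E_2$, Lemma~\ref{lemma_F_mesure_nulle}); you take this for granted when you write ``bounces accumulate at both ends of each excursion'', but it is a substantial lemma whose proof uses the uniqueness of $\n$ by contradiction; and then (ii) computes directly the law of the sequence $(\dot X_{T^{\eps}_{2n+1}})_{n\ge 0}$ via the Markov property of $W$, formula~\eqref{eq_def_Pup} and Proposition~\ref{proposition_theoRLP}, showing it is uniquely determined and equal to what $\n$ prescribes; a final time-change $(L^\eps)^{-1}\to\mathrm{Id}$ then pins down the law of $X$.
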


Before we tackle the proof these theorems, let us write some
comments and consequences. First, the It\=o excursion measure $\n$
is entirely determined by its entrance law, which is defined by
$$\n_s( \de x,\de u) := n((X_s,\dot X_s)\in \de x \otimes \de u, s < \Tri)$$
for $s>0.$ But Theorem~\ref{Theo_mesure} implies that it is
characterized by any of the two following formulas:
\begin{eqnarray} \label{formula1_entrance}
\n_s (f) &=& \Pupz(f(X_s,\dot X_s) H(X_s,\dot X_s) ^{-1} ), \quad
s>0,
\end{eqnarray}
for $f:D^0 \to \Real_+$ measurable.
\begin{eqnarray}
\label{formula2_entrance} \n_s (f) &=& \lim_{(x,u) \to (0,0)}
H(x,u)^{-1} \Prob_{x,u}^c(f(X_s,\dot X_s),\Tri>s), \quad s>0,
\end{eqnarray}
for $f:D^0 \to \Real_+$ continuous.

Formulas similar to these are found in the case of self-similar
Markov processes studied by Rivero \cite{rivero04}. This ends the
parallel between our works. Rivero underlined that the self-similar
Markov process conditioned on never hitting 0 that he introduced
plays the same role as the Bessel process for the Brownian motion.
In our model, this role is played by the reflected Kolmogorov
process conditioned on never hitting $(0,0)$. Here is a short
presentation of this parallel. Write $P_x$ for the law of a Brownian
motion starting from position $x$, $\widetilde P_x$ for the law of
the ``three-dimensional'' Bessel process starting from $x$. Write
$n$ for the It\=o excursion measure of the absolute value of the
Brownian motion (that is, the Brownian motion reflected at 0), and
$\Tr$ for the hitting time of 0. Then the inverse function is
excessive (i.e nonnegative and superharmonic) for the Bessel process
and we have the two well-known formulas
\begin{eqnarray*}
\n(f(X), \Tr>T) &=& \widetilde P_0 (f(X)/X_T ) \\
\n (f(X), \Tr>T) &=& \lim_{x \to 0} \Inv x P_x (f(X), \Tr>T),
\end{eqnarray*}
for any $\F_t-$stopping time $T$ and any $f$ positive measurable functional (resp. continuous functional for the second formula)  depending only on $(X_t)_{0\le t \le T}$. 
\espace

Now, let us give an application of Formula \eqref{queue_Tri_sous_n}.
Write $l$ for the local time spent by $X$ at zero, under $\Pcz$.
Formula  \eqref{queue_Tri_sous_n} implies that the inverse local
time $l^{-1}$ is a subordinator with jumping measure $\Pi$
satisfying $\Pi(\Tri>s) \propto s^{-k}.$ That is, it is a stable
subordinator of index $k$. A well-known result of Taylor and Wendel
\cite{TaylorWendel} then gives that the exact Hausdorff function of
the closure of its range (the range is the image of $\Real_+$ by
$l^{-1}$) is given by $\phi(\eps)=\eps^{k} (\ln \ln 1/\eps)^{1-k}$
almost surely. The closure of the range of $l^{-1}$ being equal to
the zero set $\mathcal Z :=\{t \ge 0: X_t=\dot X_t=0\}$, we get the
following corollary:
\begin{corollary}\label{corollary}
The  exact Hausdorff function of the set of the passage times to
$(0,0)$ of the resurrected reflected Kolmogorov process is
$\phi(\eps)=\eps^{k} (\ln \ln 1/\eps)^{1-k}$ almost surely.
\end{corollary}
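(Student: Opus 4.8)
The plan is to deduce Corollary~\ref{corollary} from Theorem~\ref{Theo_mesure} (more precisely from Formula~\eqref{queue_Tri_sous_n}) by invoking standard excursion theory together with the Taylor--Wendel theorem, exactly as sketched in the paragraph preceding the statement. First I would recall the structure coming from It\=o's program: associated to the excursion measure $\n$ there is a local time $l=(l_t)_{t\ge0}$ of the process $X$ (equivalently of $(X,\dot X)$) at the point $(0,0)$, normalized so that $\n$ is the excursion measure in the $l$-scale, and the inverse local time $l^{-1}=(l^{-1}_r)_{r\ge0}$ is a subordinator. The L\'evy--It\=o decomposition of $l^{-1}$ has no drift component here, since under $\Pcz$ the set $\mathcal Z=\{t\ge0:X_t=\dot X_t=0\}$ has zero Lebesgue measure (this is precisely the defining property of a resurrected reflected process, equivalently the condition $\n((X_0,\dot X_0)\ne(0,0))=0$ together with the recurrent-extension construction), and its L\'evy measure $\Pi$ is the image of $\n$ under the map $(x,\dot x)\mapsto \Tri$, so that $\Pi((s,\infty))=\n(\Tri>s)=C_1 s^{-k}$ by~\eqref{queue_Tri_sous_n}.

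Next I would identify $l^{-1}$ as a stable subordinator of index $k$: a subordinator with zero drift whose L\'evy measure has tail proportional to $s^{-k}$ on $(0,\infty)$ is, up to a deterministic time-change of the local time (which does not affect the range), the standard $k$-stable subordinator. Since $k=k(c)\in(0,1/4)\subset(0,1)$ by Lemma~\ref{lemme_queueTri}, this is a genuine (non-degenerate, non-trivial) stable subordinator. Then I would apply the Taylor--Wendel result \cite{TaylorWendel}: the closure of the range of a stable subordinator of index $k\in(0,1)$ has exact Hausdorff measure function $\phi(\eps)=\eps^{k}(\ln\ln 1/\eps)^{1-k}$ almost surely, in the sense that $0<\mathcal H^{\phi}(\overline{\mathrm{Range}(l^{-1})}\cap[0,t])<\infty$ a.s.\ for every $t>0$. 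Finally I would note that the closure of the range of $l^{-1}$ coincides a.s.\ with $\mathcal Z$ — this is the standard fact that the zero set of a nice Markov process equals the support of its local time, equivalently the closed range of the inverse local time — and that $\mathcal Z$ is exactly the set of passage times to $(0,0)$ of the resurrected reflected Kolmogorov process. Combining these gives the statement.

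The only genuinely delicate point, and the one I would spell out rather than hand-wave, is the justification that $l^{-1}$ has no drift, i.e.\ that the resurrected process spends zero Lebesgue time at $(0,0)$; but this is built into the very definition of the recurrent extensions we consider (the excursion measures satisfy $n((X_0,\dot X_0)\ne(0,0))=0$ and the process ``leaves $(0,0)$ continuously''), so it requires only a sentence of recollection rather than a new argument. Everything else is a citation: \cite{TaylorWendel} for the Hausdorff function of the stable range, and the standard It\=o-excursion-theory correspondence (already invoked via Blumenthal \cite{Blumenthal83}) for the identification $\mathcal Z=\overline{\mathrm{Range}(l^{-1})}$ and for the fact that the jump measure of $l^{-1}$ is the pushforward of $\n$ by $\Tri$. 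I therefore expect the proof to be short, with the main ``obstacle'' being purely expository: making explicit that $k\in(0,1)$ so that the Taylor--Wendel theorem applies verbatim, and that the normalization of local time only rescales $l^{-1}$ by a constant, leaving the range — hence the exact Hausdorff function — unchanged.
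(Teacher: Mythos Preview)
Your proposal is correct and follows exactly the route the paper takes: identify the inverse local time as a stable subordinator of index $k$ via Formula~\eqref{queue_Tri_sous_n}, apply Taylor--Wendel, and conclude by identifying the closure of its range with $\mathcal Z$. One small slip: the condition $\n((X_0,\dot X_0)\ne(0,0))=0$ governs continuous entrance of the excursions, not the absence of drift in $l^{-1}$; the latter comes purely from the requirement that the recurrent extension spend zero Lebesgue time at $(0,0)$, which you do state correctly elsewhere.
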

It is also clear that the set of the bouncing times of the
resurrected reflected Langevin process -- the moments when the
process is at zero with a nonzero speed -- is countable. Therefore
the zero set of the resurrected reflected Langevin process has the
same exact Hausdorff function. \espace

Finally, we should mention that the self-similarity property enjoyed
by the Kolmogorov process easily spreads to all the processes we
introduced. If $a$ is a positive constant, denote by $(X^a, \dot
X^a)$ the process $(a^3 X_{a^{-2}t}, a X_{a^{-2}t})_{t\ge 0}$. Then
the law of $(X^a, \dot X^a)$ under $\Probc_{x,u}$ is simply
$\Probc_{a^3x,au}$. We have $H(a^3x, au)=a^{2k}H(x,u).$ The law of
$(X^a, \dot X^a)$ under $\Probup_{x,u}$, resp. $\Pupz$, is simply
$\Probup_{a^3x,au}$, resp. $\Pupz$. Finally, the measure of $(X^a,
\dot X^a)$ under $\n$ is simply $a^{2k}\n$.

\espace Last two subsections are devoted to the proof of the two
theorems.

\subsection{The unique recurrent extension compatible with $\Psg_t^c$}

\subsubsection*{Construction of the excursion measure}

The function $1/H$ is excessive for the semigroup $\widetilde
\Psg_t$ and the corresponding $h-$transform is $\Psg_t^c$ (see
Definition~\ref{def_Pup}). Write $\n$ for the $h-$tranform of
$\Pupz$ via this excessive function $1/H$. That is, $\n$ is the
unique measure on $\Ccal$ carried by $\{\Tri>0\}$ such that under
$\n$ the coordinate process is Markovian with semigroup $\Psg_t^c$,
and for any $\F_t-$stopping time $T$ and any $A_T$ in $\F_T$, we
have
$$\n(A_T, T<\Tri) = \Pupz(A_T, H(X_T,\dot X_T)^{-1}).$$

Then, $\n$ is a pseudo-excursion measure compatible with semigroup
$\Psg_t^c$, which verifies $\n((X_0,\dot X_0) \ne (0,0)) = 0$ and
satisfies Formula~\eqref{formula1}. For $f$ continuous functional
depending only on $(X_t,\dot X_t)_{t\le T}$, we have
\begin{eqnarray*}
\Pupz(f(X_s,\dot X_s) H(X_s,\dot X_s)^{-1} )
 &=& \lim_{(x,u) \to (0,0)} \Probup_{x,u}(f(X_s,\dot X_s) H(X_s,\dot X_s)^{-1}) \\
 &=& \lim_{(x,u) \to (0,0)} \Inv {H(x,u)} \Prob_{x,u}^c (f(X_s,\dot X_s), \Tri>s),
\end{eqnarray*}
so that the pseudo-excursion measure $\n$ also satisfies
Formula~\eqref{formula2}. In particular, taking $T=s$ and $f=1$, and
considering the limit along the half-line $x=0$, this gives
$$ \n(\Tri>s) = \lim_{u\to 0} u^{-2k} \Prob_{0,u}(\Tri>s).$$
Using Lemma~\ref{lemme_queuebis} and the scaling invariance
property, we get
 \begin{equation} 
\nonumber \n(\Tri>s) = C_1 s^{-k},
\end{equation}
where $C_1$ is the constant defined by~\eqref{valeur_C_1}. This is exactly Formula~\eqref{queue_Tri_sous_n}. 
This formula gives, in particular, $$ \n(1-e^{-\Tri}) = C_1
\Gamma(1-k),$$ where $\Gamma$ denotes the usual Gamma function.
Hence, $\n$ is an excursion measure.

\espace

Finally, in order to establish Theorem~\ref{Theo_mesure} we just
should prove that $\n$ is the only excursion measure compatible with
the semigroup $\Psg_t^c$ such that $\n((X_0,\dot X_0) \ne (0,0)
)=0$. That is, we should show the uniqueness of the law of the
resurrected reflected process.

\subsubsection*{Uniqueness of the excursion measure}

Let $\n'$ be such an excursion measure, compatible with the
semigroup $\Psg_t^c$, and satisfying $\n'((X_0,\dot X_0) \ne (0,0)
)=0$. We will prove that $\n$ and $\n'$ coincide, up to a
multiplicative constant. Recall that $\Tr_1$ is defined as the
infimum of $\{t>0,X_t=0\}$.
\begin{lemma} \label{Tr_1=0} The measure $\n'$ satisfies:  $$ \n'(\Tr_1\ne 0)=0$$
\end{lemma}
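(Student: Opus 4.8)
The plan is to prove that $\n'(\Tr_1>\eps)=0$ for every $\eps>0$; since $\{\Tr_1\ne0\}=\bigcup_{n\ge1}\{\Tr_1>1/n\}$, this gives the lemma. Note first that $\n'(\Tr_1>\eps)$ is finite: as $\Tr_1\le\Tri$ one has $\{\Tr_1>\eps\}\subseteq\{\Tri>\eps\}$ and $\n'(\Tri>\eps)\le(1-e^{-\eps})^{-1}\n'(1-e^{-\Tri})<\infty$. Two elementary facts will be used throughout. Since $\n'$ is carried by $\Ccal$ (hence by c\`adl\`ag paths) and $\n'((X_0,\dot X_0)\ne(0,0))=0$, right-continuity forces $(X_t,\dot X_t)\To(0,0)$ as $t\downarrow0$, $\n'$-almost everywhere. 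And on the event $\{\Tr_1>t\}$ the process has not yet bounced, so there its law is governed by the free Kolmogorov semigroup; in particular, for $(x,u)\in D$, the number $\Prob_{x,u}^c(\Tr_1>r)$ is the probability that the free Kolmogorov process (integrated Brownian motion together with its derivative) started from $(x,u)$ remains in $\{X>0\}$ throughout $[0,r]$.

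The probabilistic heart of the argument is that this survival probability vanishes near the origin: for each fixed $r>0$, $\Prob_{x,u}^c(\Tr_1>r)\To0$ as $(x,u)\to(0,0)$. This amounts to the fact that the integrated Brownian motion started at $(0,0)$ returns to the axis $\{X=0\}$ immediately, i.e.\ $\inf\{t>0:\int_0^tB_s\,\de s=0\}=0$ almost surely. I would deduce this from Blumenthal's $0$--$1$ law for the Kolmogorov process (a Feller, strongly Markov process): the germ event $\{\int_0^tB_s\,\de s\ge0\ \text{for all small}\ t\}$ has probability $0$ or $1$, and the symmetry $B\leftrightarrow-B$ excludes probability $1$ (it would force $\int_0^tB_s\,\de s\equiv0$ on a neighbourhood of $0$); hence $t\mapsto\int_0^tB_s\,\de s$ takes both signs in every right-neighbourhood of $0$, and by continuity it vanishes along a sequence of times decreasing to $0$. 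A reverse-Fatou (upper-semicontinuity) argument then transfers this to the limit $(x,u)\to(0,0)$. Conceptually, this says that the free Kolmogorov semigroup killed on $\{X=0\}$ admits no nonzero entrance law from the boundary point $(0,0)$ --- which is precisely what forbids an excursion to survive in $\{X>0\}$ just after leaving the origin. Moreover, writing $\rho(x,u):=|x|^{1/3}+|u|$ for the homogeneous ``norm'' adapted to the scaling $(x,u)\mapsto(a^3x,au)$, the identity $\Prob_{a^3x,au}^c(\Tr_1>r)=\Prob_{x,u}^c(\Tr_1>a^{-2}r)$ together with the tail $\Prob_1^c(\Tr_1>t)\underset{t\to\infty}{\sim}c't^{-1/4}$ of~\eqref{queue_TrrII} (taken uniformly over starting points in a compact subset of $D$) yields the quantitative bound $\sup_{\rho(z)\le\eta}\Prob_z^c(\Tr_1>r)\le C\,\eta^{1/2}r^{-1/4}$, the supremum being over $z\in D$.

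To use this under $\n'$, fix $\eps>0$ and introduce the stopping time $\sigma_\eta:=\inf\{t>0:\rho(X_t,\dot X_t)\ge\eta\}$. Since $(X_0,\dot X_0)=(0,0)$ and paths are right-continuous, $\sigma_\eta>0$ $\n'$-a.e.; since $\rho$ has only downward jumps (at bounces), $\rho(X_{\sigma_\eta},\dot X_{\sigma_\eta})=\eta$ on $\{\sigma_\eta<\infty\}$, where moreover $\sigma_\eta<\Tri$. I would then split $\n'(\Tr_1>\eps)=\n'(\Tr_1>\eps,\,\sigma_\eta\ge\eps/2)+\n'(\Tr_1>\eps,\,\sigma_\eta<\eps/2)$. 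On $\{\sigma_\eta\ge\eps/2\}$, apply the compatibility relation at the deterministic time $\eps/2$ (and note $\{\Tr_1>\eps/2\}\subseteq\{\eps/2<\Tri\}$): there $\rho(X_{\eps/2},\dot X_{\eps/2})\le\eta$, so this term is at most $\bigl(\sup_{\rho(z)\le\eta}\Prob_z^c(\Tr_1>\eps/2)\bigr)\,\n'(\Tr_1>\eps/2)$, which tends to $0$ as $\eta\downarrow0$, the factor $\n'(\Tr_1>\eps/2)$ being a fixed finite number. On $\{\sigma_\eta<\eps/2\}$, apply compatibility at the stopping time $\sigma_\eta$: on $\{\Tr_1>\eps\}\cap\{\sigma_\eta<\eps/2\}$ the process survives (without bouncing) a further time at least $\eps/2$ starting from a state with $\rho=\eta$, so this term is at most $\bigl(\sup_{\rho(z)=\eta}\Prob_z^c(\Tr_1>\eps/2)\bigr)\,\n'(\sigma_\eta<\eps/2)\le C\,\eta^{1/2}\,\n'(\sigma_\eta<\eps/2)$. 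Finally, one more use of compatibility at $\sigma_\eta$, with $\gamma:=\inf_{\rho(z)=1}\Prob_z^c(\Tri>1)>0$ (a positive infimum over the compact unit sphere, by the Feller property) and the scaling $\Prob_z^c(\Tri>\eta^2)=\Prob_{z/\eta}^c(\Tri>1)$, gives $\n'(\sigma_\eta<\eps/2)\le\gamma^{-1}\,\n'\bigl(\sigma_\eta<\eps/2,\,\Tri>\sigma_\eta+\eta^2\bigr)\le\gamma^{-1}\,\n'(\Tri>\eta^2)$. Combining this with the a priori tail bound $\n'(\Tri>s)=O(s^{-k})$ as $s\downarrow0$ --- which follows from the estimate $\Prob_{x,u}^c(\Tri>s)\le\bar K\,\rho(x,u)^{2k}s^{-k}$ (a consequence of Lemma~\ref{lemme_queuebis} and scaling) together with the compatibility relation --- the second term is $O(\eta^{1/2}\cdot\eta^{-2k})=O(\eta^{1/2-2k})$, hence tends to $0$ because $k=k(c)<1/4$. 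Letting $\eta\downarrow0$ yields $\n'(\Tr_1>\eps)=0$, as wanted.

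The main obstacle --- and the reason this is not a one-line dominated-convergence passage ``through time $0$'' --- is that $\n'$ is an \emph{infinite} measure whose mass of small and short excursions blows up as the scale shrinks ($\n'(\Tri>s)\to\infty$, $\n'(\sigma_\eta<\eps/2)\to\infty$); the argument goes through only because the smallness of the free first-passage probability near $(0,0)$ is controlled by the exponent $1/4$ (the tail exponent of $\Tr_1$) while the growth of the $\Tri$-tail of $\n'$ is controlled by the exponent $k$, and it is precisely the subcriticality $c<\ccr$, i.e.\ $k<1/4$, that makes these two exponents balance in favour of $0$. A secondary, separate point to be settled is the rough a priori bound $\n'(\Tri>s)\lesssim s^{-k}$ near $0$ for a \emph{general} compatible excursion measure $\n'$, before its exact tail $\n'(\Tri>s)=C_1's^{-k}$ is available.
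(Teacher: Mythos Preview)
Your approach is genuinely different from the paper's and morally sound, but as written it has a real gap --- exactly the one you flag at the end.

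\emph{Comparison with the paper.} The paper argues by contradiction: if $\n'(\Tr_1\ne0)>0$, then restricting to $\{\Tr_1\ne0\}$ and killing at $\Tr_1$ yields an excursion measure compatible with $\Psg_t^0$, which by the uniqueness result of~\cite{reflecting} must be the It\=o measure of the totally inelastic model; in particular $\Vr_1$ has density proportional to $v^{-3/2}$. The Markov property at $\Tr_1$ together with $\Probc_v(\Tri>t)\asymp v^{2k}t^{-k}$ then makes $\n'(\Tri-\Tr_1>t)=\infty$ (because $v^{-3/2}v^{2k}$ is not integrable near $0$, precisely since $k<1/4$), contradicting $\n'(1-e^{-\Tri})<\infty$. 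Your argument avoids the external input from~\cite{reflecting} and is more self-contained; both proofs ultimately rest on the same exponent comparison $2k<1/2$.

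\emph{The gap.} Your chain for the second term ends with $\eta^{1/2}\cdot\n'(\Tri>\eta^2)$, and you then need $\n'(\Tri>s)=O(s^{-k})$ as $s\downarrow0$. The sketch ``$\Prob_{x,u}^c(\Tri>s)\le\bar K\rho^{2k}s^{-k}$ plus compatibility'' does not yield this: compatibility at a positive time $t_0$ controls $\n'(\Tri>t_0+s)$, not $\n'(\Tri>s)$ for small $s$; and the only universal a priori bound, from $\n'(1-e^{-\Tri})<\infty$, is $\n'(\Tri>s)=O(s^{-1})$, far too weak. The quantity $\n'(H(X_t,\dot X_t),\Tri>t)$ is indeed constant in $t$, but you have no way to know it is finite for a \emph{general} $\n'$ at this stage.

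\emph{How to close it.} You do not need the a priori tail at all. Replace the step $\n'(\sigma_\eta<\eps/2)\le\gamma^{-1}\n'(\Tri>\eta^2)$ by the following: by scaling and Lemma~\ref{lemme_queuebis}, $\Prob_z^c(\Tri>1)\ge\gamma''\,\eta^{2k}$ uniformly for $\rho(z)=\eta$ and $\eta$ small; hence compatibility at $\sigma_\eta$ gives
\[
\gamma''\,\eta^{2k}\,\n'(\sigma_\eta<\infty)\ \le\ \n'(\Tri-\sigma_\eta>1,\ \sigma_\eta<\infty)\ \le\ \n'(\Tri>1)<\infty,
\]
so $\n'(\sigma_\eta<\eps/2)\le\n'(\sigma_\eta<\infty)\le(\gamma'')^{-1}\eta^{-2k}\n'(\Tri>1)$. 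Your second term is then $O(\eta^{1/2-2k})\to0$, exactly as you wanted. Note, however, that the ``unit sphere'' $\{\rho=1\}\cap D$ is not compact in $D$ (its closure contains $(0,-1)\notin D$), so the positivity of $\gamma$, $\gamma''$ and the uniformity of the $\Tr_1$-tail cannot be read off from ``compactness plus Feller'' alone; you need a short separate argument handling the approach to $(0,-1)$ (one bounce brings you near $(0,c)\in D$).
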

\begin{proof}
This condition will appear to be necessary to have the third
property of excursion measures, that is $ \n'(1-e^{-\Tri})<\infty.$
Suppose on the contrary that $\n'(\Tr_1\ne 0)>0$ and write $\tilde
\n(\cdot)=\n'(\cdot \un_{\Tr_1\ne 0})$. The measure $\tilde \n$ is
an excursion measure compatible with the semigroup $\Psg_t^c$ such
that $\tilde \n((X_0,\dot X_0) \ne (0,0) )=0$, satisfying $\tilde
\n(\Tr_1=0)=0.$ Consider $\nbis((X_t,\dot X_t)_{t\ge 0}) := \tilde
\n((X_t \un_{t<\Tr_1},\dot X_t \un_{t<\Tr_1})_{t\ge0})$ the
excursion measure of the process killed at time $\Tr_1$.

The measure $\nbis$ is an excursion measure compatible with the
semigroup $\Psg_t^0$, semigroup of the Kolmogorov process killed at
time $\Tr_1$ (the first hitting time of $\{0\} \times \Real$).
Therefore its first marginal must be the excursion measure of the
Langevin process reflected on an inelastic boundary, introduced and
studied in \cite{reflecting}. In particular, under $\nbis$, the
absolute value of the incoming speed at time $\Tr_1$, or $|\dot
X_{\Tr_1 -}|$, is distributed proportionally to $v^{-\frac 32} \de
v$ (see \cite{reflecting}, Corollary 2, (ii)). This stays true under
$\nti$ and implies that $\Vr_1=c |\dot X_{\Tr_1 -}|$ is also
distributed proportionally to $v^{-\frac 32} \de v$. Now, a Markov
property at the stopping time $\Tr_1$ under $\nti$ gives
$$ \nti(\Tri-\Tr_1>t|\Vr_1=v) = \Prob_v^c(\Tri>t) = \Prob_1^c(\Tri>v^{-2}t)  \underset{v^{-2}t \to \infty}\sim  C v^{2k} t^{-k}$$
As a consequence the function $v \mapsto v^{-\frac 32}
\nti(\Tri-\Tr_1>t|\Vr_1=v)$ is not integrable in the neighborhood of
0. That is $\nti(\Tri-\Tr_1>t) = +\infty$, we get a contradiction.
\end{proof}

Recall that we owe to prove that $\n'$ and $\n$ are equal, up to a
multiplicative constant. Let us work on the corresponding entrance
laws. Take $s>0$ and $f$ a bounded continuous function. It is
sufficient to prove $\n'_s(f) = C \n_s(f)$, where $C$ is a constant
independent of $s$ and $f$.

By reformulating Lemma~\ref{Tr_1=0}, time $\Tr_1$ is zero
$\n'$-almost surely, in the sense that the $\n'$-measure of the
complementary event is 0. That is, $\n'$-a.s., the first coordinate
of the process comes back to zero just after the initial time, while
the second coordinate cannot be zero, for the simple reason that we
are working on an excursion outside from $(0,0)$. This, together
with the fact that the velocity starts from $\dot X_0=0$ and is
right-continuous, implies that $\n'$-almost surely, the time
$\tau_v$ (which, we recall, is the instant of the first bounce with
speed greater than $v$) is going to $0$ when $v$ is going to 0.

 We deduce, by dominated convergence, from the continuity of $f$, and, again, from the
right-continuity of the paths, that
\begin{equation} \label{n'_s=lim}
\n'_s(f) = \lim_{u\to 0} \n'(f(X_{s+\tau_v},\dot X_{s+\tau_v})
\un_{\tau_v<\infty, \Tri>s+\tau_v}).
\end{equation}
An application of the Markov property gives
\begin{eqnarray*}
  \n'(f(X_{s+\tau_v},\dot X_{s+\tau_v}) \un_{\tau_v<\infty, \Tri>s+\tau_v}) &=&
  \int_{\Real_+} \n'(\dot X_{\tau_v} \in \de u) \Probc_u(f(X_s, \dot X_s)\un_{\Tri>s}) \\
   &=& \int_{\Real_+} \n'(\dot X_{\tau_v} \in \de u) u^{2k}g(u),
\end{eqnarray*}
where $g(u) = u^{-2k} \Probc_u(f(X_s, \dot X_s)\un_{\Tri>s})=
H(0,u)^{-1} \Probc_u(f(X_s, \dot X_s)\un_{\Tri>s})$ converges to
$\n_s(f)$ when $u\to 0$, by Formula~\eqref{formula2}. Moreover the
function $u^{2k} g(u)$ is bounded by $\|f\|_\infty$, and for any
$\eps>0$ we have $\n'(\dot X_{\tau_v}>\eps) \to 0$ when $v \to 0$.
Informally, all this explains that when $v$ is small, all the mass
in the integral is concentrated in the neighborhood of $0$, where we
can replace $g(u)$ by $\n_s(f)$. More precisely, write
$$ \int_{\Real_+} \n'(\dot X_{\tau_v} \in \de u) u^{2k}g(u) = I(v) + J(v),$$
where
\begin{eqnarray*}
 I(v) &=& \int_0^1 \n'(\dot X_{\tau_v} \in \de u) u^{2k} \n_s(f), \\
 &&\\
 J(v) &=& \int_0^\infty \n'(\dot X_{\tau_v} \in \de u) u^{2k} (g(u) - \n_s(f) \un_{u\le 1}).
\end{eqnarray*}
By splitting the integral defining $J(v)$, we deduce that $J(v)$ is
negligible compared to $1 \vee I(v).$ Recalling that the sum $ I(v)
+ J(v)$ converges to $\n'_s(f)$ (Formula \eqref{n'_s=lim}), we get
that $I(v)$ converges to $\n'_s(f)$ when $v\to 0$, while $J(v)$
converges to 0.

We thus have
$$\n'_s(f)= C \n_s(f),$$
where $C$ is independent of $s$ and $f$ and given by
$$ C= \lim_{v\to 0} \int_0^1 \n'(\dot X_{\tau_v} \in \de u) u^{2k}.$$
Uniqueness follows. Theorem~\ref{Theo_mesure} is proved.

\subsection{The weak unique solution to the $(SOR)$ equations}

We now prove Theorem~\ref{theo_SORL}.

\subsubsection*{Weak solution}

We consider, under $\Pcz$, the coordinate process $(X,\dot X)$, and
its natural filtration $(\F_t)_{t\ge0}$. We first prove that the
jumps of $\dot X$ are almost-surely summable on any finite interval.
As there are (a.s.) only finitely many jumps of amplitude greater
than a given constant on any finite interval, it is enough to prove
that the jumps of amplitude less than a given constant are (a.s.)
summable. Write $L$ for a local time of the process $(X,\dot X)$ in
$(0,0)$, $L^{-1}$ its inverse, and $\n$ the associated excursion
measure. It is sufficient to prove that the expectation of the sum
of the jumps of amplitude less than $1+1/c$ (jumps at the bouncing
times for which the outgoing velocity is less than one), and
occurring before time $L^{-1}(1)$, is finite. This expectation is
equal to
$$(1+\Inv c) \int_0^1 \n(N_{[v,1]}(X,\dot X)) \de v,$$
where we write $N_{I}(X,\dot X)$ for the number of bounces of the
process $(X,\dot X)$ with outgoing speed included in the interval
$I$. For a fixed  $v$, introduce the sequence of stopping times
defined by $\tau^v_0=0$ and $\tau^v_{n+1} = \inf \{t>\tau^v_n, X_t
=0, \dot X_t \in [v,1]\}$ for $n\ge0$. Then $N_{[v,1]}(X,\dot X)$ is
also equal to $\sup \{n, \tau^v_n < \Tri\}$. Thanks to formula
\eqref{formula1}, for any $n>0$, we have:
\begin{eqnarray*}
 \n(\Tri > \tau^v_n )&=& \Pupz(H(X_{\tau^v_n},\dot X_{\tau^v_n})^{-1} \un_{\tau^v_n<\infty} ) \\
   &=& \Pupz(\dot X_{\tau^v_n}^{-2k} \un_{\tau^v_n<\infty} ) \\
   &\le& v^{-2k} \Pupz( \tau^v_n<\infty).
\end{eqnarray*}
As a consequence, we have
\begin{eqnarray*} \n(N_{[v,1]}(X,\dot X))&\le& v^{-2k} \Pupz(\sup \{n, \tau^v_n < \Tri\})\\
&\le&  v^{-2k} \Proup(N_{[\ln v,0]}^d(S)),
\end{eqnarray*}
where we have written $N_{[\ln v,0]}^d(S)$ for the number of
instants $n\in\Z$ such that $S_n \in [\ln v,0]$. Recall also that
$\Proup$ is the law of the spatially stationary random walk. It is
now a simple verification that $\Proup(N_{[\ln v,0]}^d(S))$ is
finite and proportional to the length of the interval $[\ln(v),0]$,
that is $-\ln v $. It follows
$$\n(N_{[v,1]}(X,\dot X)) \underset {v\to 0}= O(v^{-2k} \ln(1/v)) $$
and (recall $k<1/4$)
$$ (1+\Inv c) \int_0^1 \n(N_{[v,1]}(X,\dot X)) \de v < \infty.$$
The jumps are summable.

\espace

Now, write
$$W_t= \dot X_t + (1+c) \sum_{0<s \le t} \dot X_{s-} \un_{X_s=0}.$$
We aim to show that the continuous process $W$ is a Brownian motion.
For $\eps>0$, we introduce the sequence of stopping times
$(T_n^{\eps})_{n\ge 0}$ defined by $T_0^\eps = 0$ and, for $n\ge 0$,
\begin{equation}
\left\{ \nonumber \begin{array}{rcl}
 T_{2n+1}^\eps &=& \inf\{t>T_{2n}^\eps, X_t=0, \dot X_t>\eps\} \\
T_{2n+2}^\eps &=& \inf\{t>T_{2n+1}^\eps, X_t=\dot X_t=0\}
\end{array}\right.
\end{equation}
We also introduce $F^\eps= \bigcup_{n\ge 0} [T_{2n}^\eps,
T_{2n+1}^\eps]$ and $H_t^\eps= \un_{F^\eps} (t)$. For
$0<\eps'<\eps$, we have $H^{\eps'}\le H^\eps$, or equivalently,
$F^{\eps'} \subset F^\eps$. When $\eps$ goes to $0+$, $F^\eps$
converges to the zero set $\mathcal Z =\{t, X_t=\dot X_t=0\}$, and
$H^\eps$ converges pointwisely to $H^0= \un_{\mathcal Z}$. Note that
the processes $H^\eps$ and $H^0$ are $\F_t-$adapted. Note, also,
that Corollary~\ref{corollary} implies in particular that $\mathcal
Z$ has zero Lebesgue measure. For ease of notations, we will
sometimes omit the superscript $\eps$.

Conditionally on $\dot X_{T_{2n+1}} =u$, the process
$(X_{(T_{2n+1}+t)\wedge T_{2n+2}})_{t\ge 0}$ is independent of
$\F_{T_{2n+1}}$ and has law $\Prob_u^c$. As a consequence the
process $(W_{(T_{2n+1}+t)\wedge T_{2n+2}} - W_{T_{2n+1}})_{t\ge 0}$
is a Brownian motion stopped at time $T_{2n+2}- T_{2n+1}$.  Write
$$ W_t= \int_0^t H_s^\eps \de W_s + \int_0^t (1- H_s^\eps) \de W_s.$$
The process $\int_0^t (1 - H_s^\eps) \de W_s$ converges almost
surely to $\int_0^t (1 - H_s^0) \de W_s$. But the process $\int_0^t
(1- H_s^0) \de W_s$ is a continuous martingale of quadratic
variation $\int_0^t (1 - H_s^0) \de s = t$ and thus a Brownian
motion. In order to prove that it actually coincides  with $W$, we
just need to prove that the term $D_t^\eps := \int_0^t H_s^\eps \de
W_s$ is almost-surely converging to $0$ when $\eps \to 0$. Without
loss of generality, we just prove it on the event $t\le L^{-1}(1)$.

This term can be rewritten as
$$ D_t^\eps= \left\{ \begin{array}{ll}
\displaystyle \sum_{k\le n} \big( W_{T_{2k+1}} - W_{T_{2k}}\big) & \text{if }T_{2n+1} \le t <T_{2n+2},  \\
 \\
\displaystyle W_t-W_{T_{2n}} + \sum_{k< n} \big( W_{T_{2k+1}} - W_{T_{2k}}\big) \quad & \text{if } T_{2n} \le t<T_{2n+1}.
\end{array}
\right. $$
 Now, for any $k$, we have
$$ W_{T_{2k+1}} - W_{T_{2k}} = \dot X_{T_{2k+1}} + (1+c) \sum_{T_{2k}<s \le T_{2k+1}} \dot X_{s-} \un_{X_s=0},$$
and for any $T_{2n} \le t<T_{2n+1}$,
$$ W_t - W_{T_{2n}} = \dot X_t + (1+c) \sum_{T_{2n}<s \le t} \dot X_{s-} \un_{X_s=0},$$
Hence the term $D_t^\eps$ involves jumps of amplitude less than
$(1+c) \eps$, whose sum is going to 0 when $\eps$ goes to zero, plus
the fraction $c/(1+c)$ of the jumps occurring at times $T_{2k+1}$,
plus the possible extra term $\dot X_t$, not corresponding to any
jump. We will prove nonetheless that the jumps occurring at times
$T_{2k+1}$, and $|\dot X_t|$, are all small when $\eps$ is small
enough. It will follow that $D_t^\eps$ tends to 0 when $\eps$ goes
to 0.

Fix $\eta>0$. Write $\Ae$ for the event $$\sup_{s \le L^{-1}(1), s
\in F^\eps} \dot X_s \ge \eta.$$ We will prove that the probability
of $\Ae$ is going to 0 when $\eps$ goes to 0, so that we almost
surely don't lie in $\Ae$ for $\eps$ small enough, and as a
consequence the jumps occurring at times $T_{2k+1}$ and the possible
term $|\dot X_t|$ will then all be less than $\eta$, as requested.
Write $\widetilde T^\eps$ for the infimum of $\{t: t \in F^\eps,
|\dot X_t| \ge \eta\}$ and $n_\eps$ for the supremum of $\{n, T_{2n}
\le \widetilde T^\eps\}$. The event $\Ae$ coincides with
$\{\widetilde T^\eps<L^{-1}(1)\}$ or $\{T_{2 n_\eps+1}
<L^{-1}(1)\}$.

The Markov property at the stopping time $\widetilde T^\eps$, together with the
inequality~\eqref{controle_Vr}, gives
$$ \Prob(\{\dot X_{T_{2n_\eps +1}} \ge \eta c /2\} \cap \Ae) \ge \big( 1-{\sqrt 3} / \pi \big)  \Prob(\Ae).$$
The event $\{\dot X_{T_{2n_\eps +1}} \ge \eta c / 2\} \cap \Ae$
is contained in the event that there is an excursion occurring before time
$L^{-1}(1)$ for which the first bounce with speed greater than
$\eps$ is actually greater than $\eta c/2$. This event has
probability
$$ \n(T_1^\eps < \infty, \dot X_{T_1^\eps} \ge \eta c /2),$$
where $T_1^\eps$ is still defined as the time of the first bounce
with speed greater than $\eps$, here for the excursion. We have:
\begin{eqnarray*}
  \n( \dot X_{T_1^\eps} \ge \eta c /2, \Tri>T_1^\eps) &=& \Pupz( H(0,\dot X_{T_1^\eps})^{-1} \un_{\dot X_{T_1^\eps} \ge \eta c /2})  \\
   &\le& (\eta c /2)^{-2k} \Pupz(\dot X_{T_1^\eps} \ge \eta c /2) \\
   &\le& (\eta c /2)^{-2k} \overup\big( ]\ln( {\eta c}/({2\eps})), \infty[ \big),
\end{eqnarray*}
where we recall that $\overup$ is the stationary law of the
overshoot appearing in Proposition \ref{proposition_theoRLP}. This
probability is thus going to 0 when $\eps$ goes to 0, as well as
$\Prob(\Ae)$.

The process $W$ is a Brownian motion, and $(X,\dot X, W)$ is a
solution to Equations $(SOR)$.

\espace

\subsubsection*{Weak uniqueness}

Consider $(X,\dot X, W)$, with law $\Prob$, be any solution to $(SOR)$, and its
associated filtration $(\F_t)_{t\ge0}$. Then we have
$$ \dot X_t =W_t - (1+c) \sum_{0<s \le t} \dot X_{s-} \un_{X_s=0},$$
with $W$ a Brownian motion.

We start with the observation that the process $\dot X$ does not
explode and that the sum just involves positive jumps. Therefore
these jumps are summable.
 But the process
$\sum_{0<s \le t} \dot X_{s-} \un_{X_s=0}$ is adapted, hence $\dot
X$ is a semimartingale. As a consequence, it possesses local times
$(L^a)_{a \in \Real}$, and we have an occupation formula (see for
example \cite{Protter}, Theorem~70 Corollary~1, p216):
$$ \int_{-\infty}^{+\infty} L^a_t g(a) \de a = \int_0^t g( \dot X_{s-}) \de s,$$
for any $g$ bounded measurable function. Taking $g= \un_{\{0\}}$ shows that $\dot X$ spends no time at zero. 
It follows that the process $(X,\dot X)$ spends no time at $(0,0)$.

\espace

Now, exactly as before, introduce, for $\eps>0$, the sequence of
stopping times $T_n^\eps$, defined by $T_0^\eps=0$ and
\begin{equation}
\left\{ \nonumber \begin{array}{rcl}
 T_{2n+1}^\eps &=& \inf\{t>T_{2n}^\eps, X_t=0, \dot X_t>\eps\} \\
T_{2n+2}^\eps &=& \inf\{t>T_{2n+1}^\eps, X_t=\dot X_t=0\},
\end{array}\right.
\end{equation}
as well as $F^\eps= \bigcup_{n\ge 0} [T_{2n}^\eps, T_{2n+1}^\eps]$
and $H^\eps= \un_{F^\eps}$. Finally, define the closed set
$F=\lim_{\eps \to 0} F^\eps$ and the adapted process $H^0=\un_{F}.$
\begin{lemma}\label{lemma_F_mesure_nulle}
The set $F$ has almost
surely zero Lebesgue measure.
\end{lemma}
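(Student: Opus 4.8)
The plan is to show that $F$, the limit of the decreasing sets $F^\eps = \bigcup_{n\ge0}[T_{2n}^\eps, T_{2n+1}^\eps]$, has zero Lebesgue measure. First I would identify $F$ more explicitly: a time $t$ belongs to $F$ if and only if $t \in F^\eps$ for all $\eps > 0$, which means that arbitrarily close before $t$ (or at $t$ itself) there is a time at which $X=0$ and $\dot X = 0$ — more precisely, $t \in F$ iff $t$ lies in the closure of the zero set $\mathcal Z = \{s : X_s = \dot X_s = 0\}$ together with, possibly, the "excursion intervals" that have not yet been resolved. The key point is that $F \setminus \mathcal Z$ consists only of times lying on excursions away from $(0,0)$ that start with incoming speed $0$; since $\dot X$ is right-continuous and the excursions away from $(0,0)$ are genuine excursions of the reflected Kolmogorov process, on each such excursion interval $\dot X$ is nonzero except at countably many bouncing times. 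So $F \subset \mathcal Z \cup (\text{countable set of bouncing times}) \cup N$, where $N$ is a Lebesgue-null residual; the cleanest route is to bound $\mathrm{Leb}(F^\eps)$ directly and let $\eps \to 0$.

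The concrete estimate I would carry out: $\mathrm{Leb}(F^\eps) = \sum_{n\ge0}(T_{2n+1}^\eps - T_{2n}^\eps)$. By construction, each interval $[T_{2n}^\eps, T_{2n+1}^\eps]$ is the portion of an excursion (or of the pre-excursion gap) during which the process has not yet bounced with speed exceeding $\eps$; conditionally on the excursion, this is the time $\tau_\eps$ in the notation of Proposition~\ref{proposition_theoRLP}, started from $(0,0)$. I would decompose $F^\eps$ over the excursions of $(X,\dot X)$ away from $(0,0)$ indexed by the local time $L$, and use the excursion measure $\n$: restricting to $t \le L^{-1}(1)$ without loss of generality, the expected Lebesgue measure of $F^\eps$ over this stretch is $\n(\tau_\eps \wedge \Tri)$, where $\tau_\eps$ is the first bouncing time with speed greater than $\eps$ in the excursion. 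Using Formula~\eqref{formula1}, $\n(\tau_\eps \wedge \Tri) = \Pupz\big( (\tau_\eps \wedge \Tri)\, H(X_{\tau_\eps \wedge \Tri}, \dot X_{\tau_\eps\wedge\Tri})^{-1}\big)$, and since under $\Pupz$ we have $\tau_\eps \to 0$ almost surely as $\eps \to 0$ (condition $(*)$), together with a uniform integrability bound coming from the fact that $\Tri = +\infty$ $\Pupz$-a.s. and $H$ is bounded below on the relevant region, this expectation tends to $0$ as $\eps \to 0$. Hence $\mathbb{E}[\mathrm{Leb}(F^\eps \cap [0, L^{-1}(1)])] \to 0$, and since $F^\eps$ decreases to $F$, monotone convergence gives $\mathbb{E}[\mathrm{Leb}(F \cap [0,L^{-1}(1)])] = 0$; letting $L^{-1}(1)$ exhaust $[0,\infty)$ along the increasing sequence $L^{-1}(N)$ finishes the proof.

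The main obstacle I anticipate is making the decomposition over excursions rigorous in the weak-uniqueness setting: here we only know that $(X,\dot X,W)$ is \emph{some} solution to $(SOR)$, not that it has law $\Pcz$ — indeed establishing that is the whole point of this subsection. So I cannot yet invoke the excursion measure $\n$ attached to $\Pcz$. Instead, I must argue directly from the $(SOR)$ equations: I would use that $\dot X$ is a semimartingale with local time $L^0$ at zero (as already established just above the lemma), that the bouncing set $\{s : X_s = 0\}$ has the excursion structure inherited from the strong solution $\Prob_u^c$ on each excursion of $X$ away from $0$, and that on each such excursion started with incoming speed $0$ the accumulation-of-bounces phenomenon forces the time spent before the first $\eps$-speed bounce to shrink uniformly as $\eps\to 0$. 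Concretely I would bound $T_{2n+1}^\eps - T_{2n}^\eps$ on the $n$-th block using the tail estimate $\Prob_1^c(\Tri > t) \sim C_1 t^{-k}$ and scaling: on the block, the incoming speed into $T_{2n}^\eps$ is $0$, so the block is a rescaled copy of the $(0,0)$-started accumulation, whose duration before an $\eps$-speed bounce is $O(\eps^{2})$ times a fixed random variable with a heavy-but-integrable-enough tail. Summing over the (almost surely finitely many, for fixed $\eps$, up to $L^{-1}(1)$) blocks and then letting $\eps \to 0$, combined with the fact that the accumulation points are exactly $\mathcal Z$ which we will later show is the closure of the range of a stable subordinator, yields $\mathrm{Leb}(F) = 0$. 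The delicate bookkeeping is ensuring the sum of block-lengths is controlled \emph{uniformly in $\eps$} before passing to the limit; dominated convergence applied to the pointwise-decreasing family $\mathrm{Leb}(F^\eps \cap [0,L^{-1}(1)])$, bounded by $L^{-1}(1)$, is what makes this clean.
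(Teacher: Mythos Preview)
There is a real gap. Your identification of $F$ in the first paragraph assumes the conclusion: you assert that $F\setminus\mathcal Z$ is contained in a countable set up to a null set, but in fact $F$ also contains every interval $[t,t+\Tr_1(e^t)]$ coming from an excursion $e^t$ of $(X,\dot X)$ away from $(0,0)$ that does \emph{not} hit the boundary $\{X=0\}$ immediately (i.e.\ $\Tr_1(e^t)>0$). On such an interval $X>0$, there are no bounces at all, and hence the interval lies in every $F^\eps$. Showing that no such excursion exists is precisely the content of the lemma. Your subsequent attempts to bound $\mathrm{Leb}(F^\eps)$ then run into the circularity you yourself flag: invoking $\n$, a local time $L$ at $(0,0)$, or a scaling description of the block $[T_{2n}^\eps,T_{2n+1}^\eps]$ all presuppose that the \emph{arbitrary} weak solution $(X,\dot X)$ already enjoys the Markov and excursion structure of $\Pcz$, which is exactly what weak uniqueness is meant to establish. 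The dominated-convergence remark at the end yields $\mathrm{Leb}(F^\eps)\to\mathrm{Leb}(F)$ but says nothing about the limit being zero.

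The paper's route is entirely different and avoids any estimate on $\mathrm{Leb}(F^\eps)$. Assuming $\mathrm{Leb}(F)>0$, one time-changes $(X,\dot X)$ by the inverse of $t\mapsto\int_0^t \un_F(s)\,\de s$; the resulting process $(Y,\dot Y)$ is checked to solve $(SOR)$ with elasticity coefficient $0$, so by the known uniqueness result for $c=0$ its It\=o excursion measure is the measure $\nbis$ of \cite{reflecting}. A second time-change, keeping the full length of those excursions, produces a solution of $(SOR)$ with elasticity $c$ whose excursion measure $\nti$ is built from $\nbis$ and the laws $\Probc_v$, and in particular satisfies $\nti(\Tr_1=0)=0$. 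This contradicts Theorem~\ref{Theo_mesure}, which says the only excursion measure compatible with $\Psg_t^c$ and leaving $(0,0)$ continuously is (a multiple of) $\n$, for which $\Tr_1=0$ almost everywhere.
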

This result is not immediate. First, observe that the excursions of
the process may be of two types. Either an excursion bounces on the
boundary just after the initial time, or it doesn't. We call $\E_1$
the set of excursions of the first type, defined by
$$ \E_1:= \{(x,\dot x) \in \E| \Tr_1(x,\dot x) := \inf\{t>0, x_t=0\} = 0 \},$$
and $\E_2= \E \backslash \E_1$ the set of excursions of the second
type. Unlike before, we do not know \emph{a priori} that all the
excursions of the process lie in $\E_1$. If the process starts an
excursion at time $t$, we write $e^t$ for the corresponding
excursion.

A close look at $F$ shows that it contains not only the zero set
$\mathcal Z$, but also all the intervals $[t,t+\Tr_1(e^t)]$, where
$t$ is the starting time of an excursion $e^t \in \E_2$. Prove
Lemma~\ref{lemma_F_mesure_nulle} is equivalent to prove that there
is actually no excursion in $\E_2$.

Suppose that this fails. Then the process
$$ L(t)= \int_0^t H^0_s \de s $$
is not almost surely constantly equal to zero. We introduce its
right-continuous inverse
$$L^{-1}(t) := \inf\{s>t, L(s) >t\}.$$
There exists a Brownian motion $M$ such that for $t<L(\infty),$
$$M_t=\int_0^{L^{-1}(t)} H^0_s \de W_s.$$
Introduce the time-changed process
$$ (Y_t,\dot Y_t) = (X_{L^{-1}(t)}, \dot X_{L^{-1}(t)}),$$
stopped at time $L(\infty)$. In order to simplify the redaction, we
will often omit to specify ``stopped at time $L(\infty)$''. This
time change induces that the process $(Y,\dot Y)$ also does not
spend any time at zero, and that its excursions are that of $(X,\dot
X)$ belonging to $\E_2$, and stopped at $\Tr_1$ the first return
time to $\{0\} \times \Real$.
\begin{lemma} \label{SOR_zero}
The triplet $(Y_t,\dot Y_t, M_t)_{t\le L(\infty)}$ under $\Prob$ is
a solution of the equations $(SOR)$ with null elasticity
coefficient, stopped at time $L(\infty)$.
\end{lemma}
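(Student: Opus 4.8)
The idea is to read the dynamics of $(Y,\dot Y,M)$ off those of $(X,\dot X,W)$ by analysing precisely what the time change $t\mapsto L^{-1}(t)$ does. Since $X$ is continuous and the only jumps of $\dot X$ occur at bounces with $\Delta\dot X_s=-(1+c)\dot X_{s-}\un_{X_s=0}$, the velocity $\dot X$ does not jump at the left endpoint of an excursion; hence every excursion away from $(0,0)$ starts \emph{and} ends at $(0,0)$. Consequently $\mathcal Z$ is closed, its complement is the disjoint union of the excursion intervals, and $F^c$ is, up to a Lebesgue-null set of endpoints, the disjoint union over the excursions $e^t$ of the whole interval $(t,t+\ell(e^t))$ when $e^t\in\E_1$, and of the post-first-bounce interval $(t+\Tr_1(e^t),t+\ell(e^t))$ when $e^t\in\E_2$; moreover, for each maximal interval $(a,b)$ of $F^c$ one has $X_a=X_b=0$ and $\dot X_b=0$, while $\dot X_{a-}=0$ in the $\E_1$ case and $\dot X_{a-}<0$ (the incoming velocity at the first bounce) in the $\E_2$ case. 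On $F$, away from those endpoints, $(X,\dot X)$ is either identically $(0,0)$ (on $\mathcal Z$) or traverses a pre-first-bounce part of an $\E_2$ excursion, so it has no bounces in the interior of any maximal interval of $F$. Recall also that $L^{-1}(t)$ is a stopping time of $(\F_s)$ for each $t$, since $L$ is continuous and adapted.

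First, the position equation. On every maximal interval $(a,b)$ of $F^c$ we have $\int_a^b\dot X_r\,\de r=X_b-X_a=0$; hence $Y_t=X_{L^{-1}(t)}=\int_0^{L^{-1}(t)}\dot X_r\,\de r=\int_0^{L^{-1}(t)}\dot X_r\,\de L(r)$, and the substitution $r=L^{-1}(s)$ (legitimate $\de L$-almost everywhere, where $L^{-1}(L(r))=r$) turns this into $Y_t=\int_0^t\dot X_{L^{-1}(s)}\,\de s=\int_0^t\dot Y_s\,\de s$. Next, the noise. The process $N_t:=\int_0^{L^{-1}(t)}\un_F(s)\,\de W_s$ is a continuous martingale of the time-changed filtration $(\F_{L^{-1}(t)})$ with quadratic variation $\int_0^{L^{-1}(t)}\un_F(s)\,\de s=L(L^{-1}(t))=t$, hence a Brownian motion stopped at $L(\infty)$: this is the process $M$ of the statement. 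Writing $J_t:=\sum_{0<s\le t}\dot X_{s-}\un_{X_s=0}$, we have $\dot Y_t=\dot X_{L^{-1}(t)}=W_{L^{-1}(t)}-(1+c)J_{L^{-1}(t)}$ and $W_{L^{-1}(t)}=M_t+N_t'$ with $N_t':=\int_0^{L^{-1}(t)}\un_{F^c}(s)\,\de W_s$, so it suffices to prove that $N_t'=(1+c)J_{L^{-1}(t)}-\widetilde J_t$, where $\widetilde J_t:=\sum_{0<s\le t}\dot Y_{s-}\un_{Y_s=0}$ is the bounce sum of the $c=0$ equation.

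We check this interval by interval. On a maximal interval of $F$ there are no interior bounces, $\un_{F^c}=0$, and $Y>0$ strictly on the corresponding part of the $Y$-time axis, so neither side changes. On a maximal interval $(a,b)$ of $F^c$, the function $L$ is constant on $[a,b]$, so this whole piece is collapsed under the time change to the single instant $L(a)=L(b)$, at which $L^{-1}$ equals $b$; thus $\dot Y_{L(a)-}=\dot X_{a-}$, $\dot Y_{L(a)}=\dot X_b=0$ and $Y_{L(a)}=X_b=0$, so the collapsed piece contributes the single term $\dot X_{a-}$ to $\widetilde J$ and the term $(1+c)\big[\dot X_{a-}\un_{X_a=0}+\sum_{a<s\le b}\dot X_{s-}\un_{X_s=0}\big]$ to $(1+c)J$. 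Applying the $\dot X$-equation between $a^-$ and $b$, using $X_a=0$, $\dot X_b=0$ and the continuity of $W$, gives
$$W_b-W_a=c\,\dot X_{a-}+(1+c)\sum_{a<s\le b}\dot X_{s-}\un_{X_s=0}=(1+c)\Big[\dot X_{a-}\un_{X_a=0}+\sum_{a<s\le b}\dot X_{s-}\un_{X_s=0}\Big]-\dot X_{a-},$$
which is exactly the difference of the two contributions, while $\int_{(a,b)}\un_{F^c}\,\de W=W_b-W_a$. Summing over all maximal intervals yields $N_t'=(1+c)J_{L^{-1}(t)}-\widetilde J_t$, hence $\dot Y_t=M_t-\sum_{0<s\le t}\dot Y_{s-}\un_{Y_s=0}$. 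Since also $\dot Y_0=0$ and $Y_t=\int_0^t\dot Y_s\,\de s$, the triplet $(Y,\dot Y,M)$ is, relative to $(\F_{L^{-1}(t)})$, a solution of $(SOR)$ with $c=0$, stopped at $L(\infty)$.

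The main obstacle is the excursion bookkeeping of the first paragraph: rigorously identifying $F^c$ with the union of the complete $\E_1$-excursion intervals and the post-first-bounce parts of the $\E_2$-excursions, establishing that the endpoints of these intervals sit at height zero with the velocities claimed, and then tracking all one-sided limits of $Y$, $\dot Y$ and of $L^{-1}$ on the (null) endpoint set so that the passage from $J$ to $\widetilde J$ is exactly as described. The measure-theoretic ingredients (change of variable, time change of a continuous martingale) and the per-interval identity are then short and routine.
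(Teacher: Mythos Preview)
Your approach is correct in substance but genuinely different from the paper's. The paper works on the \emph{image} side: it takes an excursion interval $[t,t')$ of $(Y,\dot Y)$, observes that $[L^{-1}(t),L^{-1}(t'-)]$ is a maximal subinterval of $F$ (a pre-first-bounce piece of an $\E_2$ excursion) on which $X$ has no bounce, so $\dot Y_s=\dot Y_t+M_s-M_t$ there with an empty bounce sum; the single bounce term $-\dot Y_{t'-}$ appears only at the right endpoint $t'$, and summing over excursions gives the $c=0$ equation directly. You instead work on the \emph{source} side, decomposing $[0,L^{-1}(t)]$ into maximal intervals of $F$ and $F^c$ and matching increments of $N'$, $(1+c)J$ and $\widetilde J$ interval by interval. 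This buys you an explicit algebraic identity (your displayed formula for $W_b-W_a$) at the cost of more delicate bookkeeping under the time change.

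One point deserves care. Your sentence ``this whole piece is collapsed \ldots at which $L^{-1}$ equals $b$; thus $\dot Y_{L(a)-}=\dot X_{a-}$, $\dot Y_{L(a)}=\dot X_b$'' is not literally true for every maximal interval $(a,b)$ of $F^c$: several such intervals can share the same $L$-value, since consecutive $\E_1$ excursions are separated only by measure-zero pieces of $\mathcal Z\subset F$. Then $L^{-1}(L(a))$ is the right endpoint of the whole flat stretch (the start of the next $\E_2$ excursion), not $b$, and $\dot Y_{L(a)-}=\dot X_{\alpha-}$ with $\alpha$ the left endpoint of that stretch, not $a$. Fortunately your summed identity survives: the only intervals contributing a nonzero $\dot X_{a-}$ are those with $a=t+\Tr_1(e^t)$ for an $\E_2$ excursion, and such an $a$ is always the left endpoint of its flat stretch (because $[t,a]\subset F$ has positive measure), so the bijection with the jump times of $\dot Y$ and the equality $\dot Y_{L(a)-}=\dot X_{a-}$ do hold for \emph{those} intervals; all other intervals contribute zero on both sides. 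You flag exactly this bookkeeping as the ``main obstacle'' in your last paragraph, so you are aware of it, but the per-interval statement should be amended. The paper's excursion-of-$(Y,\dot Y)$ viewpoint sidesteps this issue entirely, which is what makes it shorter.
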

\begin{proof}
Let $[t,t'[$ be the interval corresponding to an excursion of
$(Y,\dot Y)$. Then the interval $[L^{-1}(t), L^{-1}(t' -)]$ is a
maximal interval included in $F$. It follows that the points
$L^{-1}(t)$ and $L^{-1}(t')$ belong to $\mathcal Z$, and $Y_t=\dot
Y_t= 0= Y_{t'}= \dot Y_{t'}$.

Let $s \in [t,t'[$. As the process $X$ has no bounce in
$[L^{-1}(t),L^{-1}(s)]$ and $(X,\dot X, W)$ is a solution to
$(SOR)$, we can write
$$ \dot X_{L^{-1}(s)}= \dot X_{L^{-1}(t)}+ W_{L^{-1}(s)} - W_{L^{-1}(t)},$$
or equivalently
$$ \dot Y_s=\dot Y_t + M_s - M_t.$$
As a consequence, we may write
$$\left\{ \begin{array}{ccl}
Y_s &=& Y_t + \displaystyle \int_t^s \dot Y_u\de u \\
\dot Y_s &=& \dot Y_t+ M_s-M_t - \sum_{t< u \le s} \dot Y_{u-}
\un_{Y_u=0},
\end{array}
\right. $$ where the sum is actually empty. Similarly,
$$\left\{ \begin{array}{ccl}
Y_{t'} &=& 0 = X_{L^{-1}(t' -)} = Y_{t' -} = Y_t + \displaystyle \int_t^{t'} \dot Y_u\de u \\
\dot Y_{t'} &=& 0 = \dot Y_{t'-} - \dot Y_{t'-} \un_{Y_{t'}=0}= \dot
Y_t+ M_{t'}-M_t - \sum_{t< u \le t'} \dot Y_{u-} \un_{Y_u=0},
\end{array}
\right. $$ where the sum now contains one term.

Adding these equalities on the excursion intervals of $(Y,\dot Y)$,
and recalling that this process spends no time at $(0,0)$, gives
$$\left\{ \begin{array}{ccl}
Y_s &=& \displaystyle \int_0^s \dot Y_u\de u \\
\dot Y_s &=& M_s - \sum_{0< u \le s} \dot Y_{u-} \un_{Y_u=0},
\end{array}
\right. $$ and $(Y, \dot Y, M)$ is a solution to $(SOR)$ with null
elasticity coefficient (stopped at time $L(\infty)$).
\end{proof}
The article \cite{SDE}, which studied equations $(SOR)$ with null
elasticity coefficient, shows that a solution $(Y,\dot Y)$ must be a
Markov process, with It\=o excursion law $\nbis$.
 We immediately introduce another change of time, in a very similar way, but without stopping the excursions of $\E_2$ at time $\Tr_1$.
Define the random set
$$ A := \mathcal Z \cup \bigcup_{\{t|e^t \in \E_2\}}[t,t+\Tri(e^t)],$$
and the adapted process $ \Ti H = \un_A.$ Define also
$$ \Ti L(t) = \int_0^t \Ti H_s \de s,$$
and $\Ti L ^{-1}$ for its right-continuous inverse. Then, there
exists a Brownian motion $\Ti M$ such that
$$ \Ti M_t = \int_0^{\Ti L ^{-1}(t)} \Ti H_s \de W_s$$
for $t <\Ti  L(\infty)$. Finally, the time-changed process
$$ (\Ti Y_t, \dot {\Ti Y}_t) = (X_{\Ti L^{-1}(t)}, \dot X_{\Ti L^{-1}(t)}),$$
stopped at time $\Ti L(\infty)$, spends no time at zero and its
excursions are the excursions of $(X,\dot X)$ included in $\E_2$.
 Remark that we have $\Ti L(\infty)
\ge  L(\infty)$ because $A \supset F$. We also get the following
lemma, similar to Lemma~\ref{SOR_zero}, and whose proof we leave to
the reader.
\begin{lemma} 
The triplet $\Big(\Ti Y_t,\dot {\Ti Y}_t, \Ti M_t\Big)_{t\le \Ti
L(\infty)}$ under $\Prob$ is a solution of the equations $(SOR)$
(with elasticity coefficient $c$), stopped at time $\Ti L(\infty)$.
\end{lemma}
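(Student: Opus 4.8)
The statement to prove is that the triplet $\big(\Ti Y_t,\dot {\Ti Y}_t, \Ti M_t\big)_{t\le \Ti L(\infty)}$ is a solution of $(SOR)$ with elasticity coefficient $c$, stopped at $\Ti L(\infty)$. The plan is to mimic, almost verbatim, the argument given for Lemma~\ref{SOR_zero}, but now keeping track of the bounces inside each $\E_2$-excursion instead of stopping it at $\Tr_1$. First I would fix an excursion interval $[t,t'[$ of $(\Ti Y,\dot {\Ti Y})$; by construction of $A$ and of the time change, the maximal interval of $A$ containing $[\Ti L^{-1}(t),\Ti L^{-1}(t')]$ has endpoints in $\mathcal Z$, so $\Ti Y_t=\dot{\Ti Y}_t=0=\Ti Y_{t'}=\dot{\Ti Y}_{t'}$, and the excursion of $(\Ti Y,\dot{\Ti Y})$ on $[t,t'[$ is exactly the excursion $e^{\Ti L^{-1}(t)}\in\E_2$ of $(X,\dot X)$, \emph{unstopped}, read in the new clock.

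Next, on this interval the time change is essentially the identity (up to the constant shift $\Ti L^{-1}(t)-t$) because $\Ti H\equiv 1$ there; hence for $s\in[t,t'[$ the relation $\dot X_{\Ti L^{-1}(s)}=\dot X_{\Ti L^{-1}(t)}+W_{\Ti L^{-1}(s)}-W_{\Ti L^{-1}(t)}-(1+c)\sum \dot X_{r-}\un_{X_r=0}$, coming from $(X,\dot X,W)$ solving $(SOR)$, translates into
$$\dot{\Ti Y}_s=\dot{\Ti Y}_t+\Ti M_s-\Ti M_t-(1+c)\sum_{t<r\le s}\dot{\Ti Y}_{r-}\un_{\Ti Y_r=0},$$
and integrating $\dot{\Ti Y}$ gives the position equation $\Ti Y_s=\Ti Y_t+\int_t^s\dot{\Ti Y}_u\de u$. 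The key point is that \emph{all} the bounces of the original excursion are now genuinely present in the sum (none is discarded by a stopping time), and the limit $s\uparrow t'$ handles the terminal bounce exactly as in the proof of Lemma~\ref{SOR_zero}: $\Ti Y_{t'-}=0$ and $\dot{\Ti Y}_{t'-}$ gets killed by the factor $(1+c)$ so that $\dot{\Ti Y}_{t'}=0$. One must also check that the jumps of $\dot{\Ti Y}$ are summable on finite intervals and that $\Ti M$ is indeed a Brownian motion on $[0,\Ti L(\infty))$: the first follows from summability of the jumps of $\dot X$ already established, the second from the quadratic-variation computation $\langle\Ti M\rangle_t=\int_0^{\Ti L^{-1}(t)}\Ti H_s\de s=t$ together with Lévy's characterization, exactly as for $M$ in the weak-solution part.

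Finally I would add up these per-excursion identities over all excursion intervals of $(\Ti Y,\dot{\Ti Y})$, using that this process spends no Lebesgue time at $(0,0)$ (which holds because $\Ti L$ removes precisely the set $A$, whose complement carries all the ``moving'' time), to obtain the global equations
$$\Ti Y_s=\int_0^s\dot{\Ti Y}_u\de u,\qquad \dot{\Ti Y}_s=\Ti M_s-(1+c)\sum_{0<u\le s}\dot{\Ti Y}_{u-}\un_{\Ti Y_u=0},$$
stopped at $\Ti L(\infty)$. The main obstacle is purely bookkeeping: verifying that $A\supset F$ together with the definition of $\E_2$ guarantees the endpoints of each maximal interval of $A$ land in $\mathcal Z$ (so that each piece of $\Ti Y$ really is a complete excursion, correctly glued to zero at both ends), and that the countably many terminal-bounce corrections telescope correctly when summed; everything else is a faithful transcription of the proof of Lemma~\ref{SOR_zero}, which is why the details are left to the reader.
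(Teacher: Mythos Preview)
Your proposal is correct and follows exactly the route the paper intends: the paper explicitly says the lemma is ``similar to Lemma~\ref{SOR_zero}, and whose proof we leave to the reader,'' and your plan is precisely that transcription with the bounces kept rather than stopped at $\Tr_1$. One small inaccuracy: unlike in Lemma~\ref{SOR_zero} there is \emph{no} terminal bounce to kill here, since each $\E_2$-excursion ends at $\Tri$ with $(X,\dot X)\to(0,0)$ continuously, so $\dot{\Ti Y}_{t'-}=0$ already and the ``$(1+c)$-killing'' step is vacuous.
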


The process $ (\Ti Y, \dot {\Ti Y})$ spends no time at $0$, is a
solution to $(SOR)$, and its excursions, stopped at $\Tr_1$, the
first return time to  $\{0\} \times \Real$, are precisely that of
$(Y,\dot Y).$ This induces that $ (\Ti Y, \dot {\Ti Y})$ is a Markov
process with It\=o excursion measure $\nti$ determined by

$$\left\{ \begin{array}{ccl}
\nti\left((x_{t \wedge \Tr_1})_{t \ge0} \in \cdot \right) &=& \nbis(x \in \cdot) \\
\nti\left((x_{t + \Tr_1})_{t \ge0} \in \cdot \right | \dot X_{\Tr_1}
= v) &=&  \Prob_v^c(x \in \cdot)
\end{array}
\right. $$ Now, the result of uniqueness of the excursion measure
implies that $\nti$ should be a multiple of $\n$, which is obviously
not the case (for example because $\nti(\Tri=0)=0$). Therefore $\Ti
L(\infty)=0= L(\infty)$ a.s. Lemma~\ref{lemma_F_mesure_nulle} is
proved.

\espace \espace

Now, introduce a third time-change, $(L^\eps)^{-1} (t) := \inf\{s>0,
L^\eps (s) >t\}$. When $\eps$ goes to 0, $(L^\eps)^{-1}$ is going to
$L^{-1}= Id$. It follows that the process $X^\eps :=
(X_{(L^\eps)^{-1}(t)})_{t\ge0}$ is going uniformly on compacts to
$X$ when $\eps$ goes to 0, almost surely. In particular the law of
$X$ is entirely determined by that of $X^\eps$. The law of $X^\eps$
is in turn entirely determined by that of $(\dot
X_{T_{2n+1}^\eps})_{n\ge 0}.$ We will now determine this law, which
will prove the uniqueness of the law of $X$.

\espace In order to avoid complex notations, we just give the
calculation of the law of $\dot X_{T_1^1}$, which is not
fundamentally different from others. For $\eps>0$ and $n\ge0$, a
Markov property for the process $W$ applied at time $T_{2n+1}^\eps$
shows that conditionally on $\dot X_{T_{2n+1}^\eps}=u$, the process
$(X_{(T_{2n+1}^\eps+t)\wedge T_{2n+2}^\eps})_{t\ge0}$ is independent
from $\F_{T_{2n+1}^\eps}$ and has law $\Prob_u^c$. Write $n_1$ for
the integer satisfying $T_{2n_1+1}^\eps \le T_1^1 <
T_{2n_1+2}^\eps$. Conditionally on $\dot X_{T_{2n_1+1}^\eps}=u$, the
process $(X_{(T_{2n_1+1}^\eps+t)\wedge T_{2n_1+2}^\eps})_{t\ge0}$
has the law $\Prob_u^c$ conditioned on reaching a speed greater than
one after a bounce.

In other words, the law of $\dot X_{T_1^1}$ under $\Prob( \cdot |
\dot X_{T_{2 n_1+1}^\eps} = u)$ is equal to that of $\dot X_{T_1^1}$
under $\Prob_u^c(\cdot | T_1^1<\infty)$. Besides, it should be clear
now that $\dot X_{T_{2 n_1+1}^\eps}$ is going to 0 when $\eps$ goes
to 0. Recall that $\Tri$, the hitting time of $(0,0)$, is the
lifetime of the excursion (under $\Prob_u^c$ as well as under $\n$).
For any $f$ positive continuous functional, we have:
\begin{eqnarray*}
  \Prob_u^c(f(\dot X_{T_1^1})|\ T_1^1< \Tri) &=& \Prob_u^c\big(f(\dot X_{T_1^1})\un_{T_1^1< \Tri}\big)\ /\: \Prob_u^c(\un_{T_1^1< \Tri}) \\
   &=& \Probt_u\Big(f(\dot X_{T_1^1}) (H(0,\dot X_{T_1^1}))^{-1}\Big)\ /\: \Probt_u((H(0,\dot X_{T_1^1}))^{-1} ) \\
   &\underset{u \to 0} \To&  \Probt_{0+}\big(f(\dot X_{T_1^1}) (H(0,\dot X_{T_1^1}))^{-1}\big)\ /\: \Probt_{0+}((H(0,\dot X_{T_1^1}))^{-1} ) \\
   &=& \n(f(\dot X_{T_1^1}) |\ T_1^1<\Tri),
\end{eqnarray*}
where we used successively \eqref{eq_def_Pup},
Proposition~\ref{proposition_theoRLP} and (a generalization of)
\eqref{formula1}. As a consequence, the law of $\dot X_{T_1^1}$
under $\Prob$ is entirely determined, and is equal to that of $\dot
X_{T_1^1}$ under $\n( \cdot |\ T_1^1< \Tri)$. Uniqueness of the
stochastic partial differential equation follows.

\bibliographystyle{abbrv}
\bibliography{jacob_RLP2_biblio}
\end{document}